\newtheorem{thm}{Theorem}[section]
\newtheorem*{thm*}{Theorem}
\newtheorem{lem}[thm]{Lemma}
\newtheorem{prop}[thm]{Proposition}
\newtheorem{cor}[thm]{Corollary}
\newtheorem{conj}[thm]{Conjecture}
\theoremstyle{definition}
\newtheorem{defn}[thm]{Definition}
\newtheorem{prob}[thm]{Problem}
\theoremstyle{remark}
\newtheorem{rem}[thm]{Remark}
\numberwithin{equation}{section}
\DeclareMathOperator{\bexp}{exp}
\DeclareMathOperator{\mo1}{mod}
\newcommand{\N}{\mathbb N}
\newcommand{\Q}{\mathbb Q}
\newcommand{\Z}{\mathbb Z}
\begin{document}
	
	\title{On $\beta$-adic expansions of powers of an algebraic integer omitting a digit}
	
		\author{Jiuzhou Zhao}

    \address{School of Mathematical Sciences,  Key Laboratory of MEA(Ministry of Education) $\&$ Shanghai Key Laboratory of PMMP,  East China Normal University, Shanghai, 200241, China}

    \email{zhao9zone@gmail.com}
	
	\author{Ruofan Li$^*$}
	
	\address{Department of Mathematics, Jinan University, Guangzhou, 510632, China}
	
	\email{liruofan@jnu.edu.cn}
	
	\subjclass[2020]{Primary 11A63; Secondary 11R04.}
	
	\keywords{Radix representation, digital problems, $p$-adic interpolation}
	
\thanks{$^*$Corresponding author.}

	\begin{abstract}
	Let $\alpha, \beta$ be two relatively prime algebraic integers in a number field $K$ and $N$ be a positive integer. We show that the number of $n\in\{1,2,\dots,N\}$ such that the $\beta$-adic expansion of $\alpha^n$ omits a given digit is less than $C_1 N^{\sigma(\beta)}$, where $\sigma(\beta):=\frac{\log(|N(\beta)|-1)}{\log|N(\beta)|}$ and $C_1$ is a constant depending only on $\beta$, if all prime ideal factors of $\beta$ are unramified and their norms are integer primes.
	\end{abstract}
	
	\maketitle
	
	\section{Introduction}\label{sc:intro}
	
	Consider the ternary expansion
	\[(2^n)_3:=a_{k_n}\dots a_1a_0,\]
	where $a_j\in\{0,1,2\}$, $0 \leq j \leq k_{n}$ satisfies $2^n=\sum_{j=0}^{k_n}a_j 3^j$. It is an interesting phenomenon that $(2^0)_3=1$, $(2^2)_3=11$ and $(2^8)_3=100111$ omit the digit $2$. No other value of $n$ such that $(2^n)_3$ omits the digit $2$ is known. Indeed, Erd\H{o}s \cite{Erdoes1979} proposed the following conjecture, which is still open. 
	
	\begin{conj}\label{2404152202}
		The ternary expansion of $2^n$ can not omit the digit $2$ for all $n\ge9$.
	\end{conj}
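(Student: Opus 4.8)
The plan is to attack the conjecture through congruences modulo powers of $3$, exploiting that $2$ is a primitive root modulo every $3^k$, with $\mathrm{ord}_{3^k}(2)=\phi(3^k)=2\cdot 3^{k-1}$. First I would record the reformulation: the base-$3$ expansion of $2^n$ omits the digit $2$ exactly when $2^n=\sum_{j\in A}3^j$ for some finite $A\subseteq\N$, that is, $2^n$ is a sum of distinct powers of $3$. For each $k$ let $G_k$ be the set of residues modulo $3^k$ whose base-$3$ digits all lie in $\{0,1\}$; since $2^n$ is a unit its last digit is forced to equal $1$, so $|G_k|=2^{k-1}$. If $2^n$ omits the digit $2$, then $2^n\bmod 3^k\in G_k$ for every $k$ up to the number of base-$3$ digits of $2^n$.

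Next I would set up the counting step, which is the engine behind the paper's $O(N^{\sigma})$ bound. Because the map $n\mapsto 2^n\bmod 3^k$ is a bijection from one period $[0,2\cdot 3^{k-1})$ onto the units modulo $3^k$, exactly $|G_k|=2^{k-1}$ exponents $n$ per period satisfy the constraint. Choosing $k\approx\log_3 N$, so that the period $2\cdot 3^{k-1}$ is comparable to $N$, and summing over periods gives that the number of $n\le N$ with $2^n\bmod 3^k\in G_k$ is $O(2^k)=O(N^{\sigma})$ with $\sigma=\log 2/\log 3$. This shows the solution set is extremely sparse, but since $N^{\sigma}\to\infty$ it cannot by itself yield finiteness: the congruence controls only the bottom $\approx\log_3 N$ digits, whereas $2^n$ has $\approx n\log_3 2$ digits.

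To make genuine progress toward finiteness I would split according to the number $m=|A|$ of nonzero digits. When $m$ is bounded, the equation $2^n=3^{e_1}+\dots+3^{e_m}$ is an $S$-unit equation with $S=\{2,3\}$ and a fixed number of terms, so by the subspace theorem (Evertse--Schlickewei--Schmidt) it has only finitely many non-degenerate solutions; this disposes of the ``few ones'' regime completely. The remaining, and genuinely hard, case is when the number of $1$-digits grows with $n$.

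Here lies the \emph{main obstacle}, and it is precisely why Erd\H{o}s's conjecture is open. Ruling out the many-digit case requires controlling $2^n\bmod 3^k$ for $k$ growing linearly in $n$, in order to see all the digits; but for such $k$ the period $2\cdot 3^{k-1}$ is exponentially larger than the range $[1,N]$ of exponents, so equidistribution and averaging give no information, since one observes vastly less than a single period. Equivalently, one would need pointwise control of the high-order base-$3$ digits of $2^n$ for $n$ in a short window, a phenomenon that neither the congruence method (which saturates at $N^{\sigma}$) nor the subspace theorem (which requires a bounded number of terms) can reach. I would therefore expect that closing this gap demands a genuinely new Diophantine input on the joint distribution of the digits of $2^n$; the heuristic that sums of distinct powers of $3$ below $X$ have density $X^{\sigma-1}$, together with the convergence of $\sum_n (2^n)^{\sigma-1}$, strongly supports finiteness but offers no mechanism for a proof.
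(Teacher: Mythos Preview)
The statement you were asked to prove is Conjecture~1.1, which the paper explicitly labels as \emph{still open}: the paper does not prove it, and no proof exists in the literature. So there is no ``paper's own proof'' to compare against.

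Your proposal is not a proof either, and you say so yourself. What you have written is a lucid diagnosis of the problem: the congruence/equidistribution argument you sketch in the second paragraph is exactly Narkiewicz's $O(N^{\sigma})$ bound \eqref{2402121459}, and indeed it is the prototype of the main theorem of this paper (Theorem~\ref{2402171658}), which generalizes that bound to $\beta$-adic expansions in number fields via $\mathfrak{p}$-adic interpolation rather than the explicit primitive-root structure you use. Your observation that the bounded-digit case falls to the subspace theorem is correct and goes beyond anything the paper attempts. Your identification of the obstacle---that one needs control of $2^n \bmod 3^k$ for $k$ linear in $n$, where the period is exponentially larger than the range of exponents---is precisely the reason the conjecture is open.

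So the honest verdict is: there is no gap to point to, because you have not claimed a proof; you have correctly recovered the density bound that the paper is about, added the $S$-unit observation, and accurately explained why neither tool closes the problem. If the assignment was to \emph{prove} the conjecture, then the proposal fails for the reason you yourself state; if it was to compare with the paper, the paper offers nothing further on this particular statement.
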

	
	This conjecture is related to the \emph{persistence} problem (see \cite{Bonuccelli2020,Faria2014}) which concerns base $b$ expansion of natural numbers. 
	Given an integer $b>1$ and a natural number $n=\sum_{j=1}^{k}d_j b^{k-j}$ with $d_j\in\{0,1,\dots,b-1\}$, define the Sloane map $S_b:\N\to\N$ by $S_b(n):=\prod_{j=1}^{k}d_j$. By \cite[Proposition 1.1]{Faria2014}, $S_b(n)<n$ for all $n \ge b$. Thus, the orbit under the Sloane map $S^m_b(n)$, $m\ge1$
	always stabilizes after a finite number of steps, that is, there exists a minimal number $l_b(n)$ such that $S^j_b(n)=S^{l_b(n)}_b(n)$ for all $j\ge l_b(n)$. When $b=2$, it is trivial to see that $l_{b}(n) = 1$ for all $n$, the \emph{persistence} problem asks whether a uniform bound of $l_{b}(n)$ exists in general.
	\begin{prob}[\emph{Persistence} problem]
		 For a given $b>2$, is there a positive number
		$B(b)$ such that $l_b(n)\le B(b)$ for all n?
	\end{prob}
	In the case of base $b=3$, the only nonzero values assumed by the Sloane map are powers of $2$. Hence, in order to answer the persistence problem
	for base $3$, it suffices to establish the following weaker form of Conjecture \ref{2404152202}. 
	\begin{conj}
		There is a positive integer $k_0$ such that for all $k\ge k_0$, the ternary expansion of $2^k$ can not omit the digit $0$. 
	\end{conj}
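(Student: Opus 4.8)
The plan is to recast the digit-omission condition as membership in a triadic Cantor set and then to show, using the $3$-adic interpolation of $n\mapsto 2^n$, that the associated quantity cannot fall into this exponentially thin set once $k$ is large. First I would record the elementary reduction. If the base-$3$ expansion $2^k=\sum_{j=0}^{m_k}a_j3^j$ omits the digit $0$, write $a_j=1+\epsilon_j$ with $\epsilon_j\in\{0,1\}$ and $m_k=\lfloor k\log 2/\log 3\rfloor$, so that
\[
2^{k+1}+1=3^{m_k+1}+2t,\qquad t=\sum_{j=0}^{m_k}\epsilon_j3^j.
\]
Hence $2^k$ omits $0$ precisely when $t=\tfrac12\!\left(2^{k+1}+1-3^{m_k+1}\right)$ is a nonnegative integer all of whose base-$3$ digits lie in $\{0,1\}$, i.e. $t\in\mathcal K_{m_k+1}$, where $\mathcal K_r:=\{0\le t<3^r:\text{every base-}3\text{ digit of }t\text{ lies in }\{0,1\}\}$ satisfies $\#\mathcal K_r=2^r$.

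This reformulation makes finiteness heuristically compelling: $\mathcal K_r$ has relative density $(2/3)^r$ in $\{0,\dots,3^r-1\}$, and since $m_k\sim k\log 2/\log 3$ grows linearly, the ``probability'' $(2/3)^{m_k+1}=\exp\!\big((m_k+1)\log\tfrac23\big)$ decays geometrically in $k$, so $\sum_k(2/3)^{m_k}<\infty$; a Borel--Cantelli-type bound would then leave only finitely many admissible $k$. To make this rigorous I would drive the $3$-adic machinery behind the density bound through an equidistribution argument. Restricting to even $k=2s$, one has $2^{2s}=4^s=\exp_3(s\log_3 4)$, a $3$-adic analytic function of $s$ (here $4\equiv 1\pmod 3$, so the $3$-adic logarithm converges), whose reduction modulo $3^r$ is periodic in $s$ with $3$-power period and whose truncations read off the low-order base-$3$ digits. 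For each scale $r$, Weierstrass preparation / Strassmann's theorem applied to $\exp_3(s\log_3 4)$ bounds the number of $s$ in a window for which $4^s\bmod 3^r$ realizes a prescribed condition, and the goal would be to sum such bounds over the $2^r$ admissible residues in $\mathcal K_r$ and over the scales $r\le m_k$.

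The hard part---and the point at which any such argument currently stalls---is uniformity in the number of constrained digits. The reduction imposes the \emph{diagonal} condition $t\in\mathcal K_{m_k+1}$, in which the number of digits forced to lie in $\{0,1\}$ grows with $k$; it is not a single congruence, and so finiteness cannot follow from any fixed-modulus obstruction. Indeed, for every $r$ there exist $k$ whose $r$ lowest base-$3$ digits are all nonzero: since $2$ is a primitive root modulo $3^r$, the residues $2^k\bmod 3^r$ run through every unit, in particular through the $2^r$ residues with all digits in $\{1,2\}$, so the admissible residue sets never die out and the finiteness mechanism must exploit the matching of the digit-length $m_k$ to the size of $2^k$. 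The quantitative difficulty is that the Weierstrass degree governing the level sets of $\exp_3(s\log_3 4)$ modulo $3^r$ grows with $r$, while the argument needs these estimates uniformly for $r$ up to $m_k\sim k$; equivalently, the exponent $\sigma(3)=\log 2/\log 3>0$ delivered by these methods would have to be reduced effectively to $0$ to yield a convergent count. Overcoming this amounts to a genuinely new quantitative independence statement between the dyadic growth of $2^k$ and its triadic digit structure---precisely the content of the conjecture---so I would expect a complete proof to require either an equidistribution input for $\{2^k\bmod 3^r\}$ uniform across all scales $r\le m_k$ or a super-polynomial sharpening of the density bound, neither of which is within reach of the present $p$-adic techniques.
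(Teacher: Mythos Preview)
The statement you are attempting to prove is a \emph{conjecture} in the paper, not a theorem: it is presented as a weaker form of Erd\H{o}s's Conjecture~1.1 whose resolution would settle the persistence problem in base~$3$, and the paper offers no proof whatsoever. So there is no ``paper's own proof'' against which to compare your attempt.

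Your proposal is also not a proof, and to your credit you say so explicitly: after the (correct) reduction to $t=\tfrac12(2^{k+1}+1-3^{m_k+1})\in\mathcal K_{m_k+1}$ and the Borel--Cantelli heuristic, you observe that the argument ``currently stalls'' at the diagonal step, because the number of constrained digits grows with $k$ and the $3$-adic Strassmann/Weierstrass bounds are not uniform in that parameter. That diagnosis is accurate and is exactly why the paper's own $p$-adic interpolation method yields only the density bound $\mathcal M(N)\ll N^{\log 2/\log 3}$ and not finiteness. In particular, your remark that $2$ is a primitive root modulo $3^r$, so the admissible residue classes never empty out at any fixed level, correctly pinpoints why no fixed-modulus obstruction can work.

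In short: there is a genuine gap, namely the entire conjecture. Your write-up is a sound heuristic discussion of why the problem is hard and why the techniques of the paper do not reach it, but it does not constitute a proof, and none is expected.
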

		
	Another problem related to Conjecture \ref{2404152202} is determining \emph{practical binomial coefficients} (see \cite{Leonetti2020,Sanna2021}). A positive integer $n$ is called \emph{practical} if all positive integers less than $n$ can be written as a sum of distinct divisors of $n$. Leonetti and Sanna \cite{Leonetti2020} remarked that, likely, there are only finitely many positive integers $n$ such that $\binom{2n}{n}$ is not a practical number. They proved that if $n$ is a power of $2$ whose ternary expansion omits the digit $2$, then $\binom{2n}{n}$ is not a practical number \cite[Proposition 2.1]{Leonetti2020}. 
	
	Progress towards Conjecture \ref{2404152202} has been in the form of upper bounds on 
	\[\mathcal M(N):=\#\big\{1\le n\le N\colon (2^n)_{3} \text{ omits the digit $2$} \big\},\]
	where the symbol $\#$ denote cardinality. The best known bound on $\mathcal M(N)$ is due to Narkiewicz \cite{Narkiewicz1980} who proved that
	\begin{equation}\label{2402121459}
		\mathcal M(N)\le1.62N^{\sigma}, \quad\text{	where $\sigma:=\log_32\approx0.63092$. }
	\end{equation}
	We refer the reader to \cite{Dupuy2016a,Holdum2015,Kennedy2000,Lagarias2009} for more results related to Narkiewicz's result. 
	
	In this paper, we are going to generalize Narkiewicz's result \eqref{2402121459} by describing the above phenomena in general algebraic number fields. Let $K$ be a number field with ring of integers $\mathcal O_K$. Fix an element $\beta\in \mathcal O_K$ with norm  $|N(\beta)|>1$.
	\begin{defn}
		We call $(\beta,\,\mathcal \{0,1,\dots,|N(\beta)|-1\})$ a \emph{canonical number system (CNS)} in $\mathcal O_K$, if every $\alpha\in\mathcal O_K$ can be represented uniquely as 
		\begin{equation}
			\alpha=a_0+a_1\beta+\cdots+a_m \beta^m,\quad a_j\in\mathcal \{0,1,\dots,|N(\beta)|-1\} \;(j=0,1,\dots,m),
		\end{equation}
		which is called the \emph{radix expansion} of $\alpha$ in base $\beta$. For convenience, denote 
		\begin{equation}
			(\alpha)_{\beta}:=a_m\dots a_{1}a_0,\text{ and } (\alpha)_{\beta,j}:=a_j\;(j=0,1,\dots,m). 
		\end{equation}
	\end{defn}
	
    For $b \in \{0,1,\dots,|N(\beta)|-1\}$, denote
	\begin{equation}\label{2402141036}
	\mathcal M_b(\alpha,\beta,N):=\#\big\{1\le n\le N\colon (\alpha^n)_{\beta,j}\neq b\;\text{for all possible $j$}\big\},
	\end{equation}
	For the rest of this article, we assume $\alpha$ is not a root of unity as otherwise $\alpha^n$ only have finitely many different value as $n$ changes. Recall that $\alpha,\,\beta\in\mathcal O_K$ are \emph{relatively prime} if the prime ideal decomposition 
	\begin{equation}\label{2404121129}
		(\beta)=\mathfrak p_1^{e_1}\cdots\mathfrak p_{h}^{e_h}
	\end{equation}
	satisfies that $\mathfrak p_j\nmid\alpha$ (i.e. $\alpha\notin\mathfrak p_j $) for all $j=1,2,\dots,h$. Our first result is an upper bound of $\mathcal M_b(\alpha,\beta,N)$ (similar to \eqref{2402121459}).
		
	\begin{thm} \label{thm:2402262104}
	 Suppose $(\beta,\,\{0,1,\dots,|N(\beta)|-1\})$ is a CNS, $\beta$ is not divided by ramified primes and $\alpha$ is relatively prime to $\beta$, then 
	 \begin{equation}
	 \mathcal M_b(\alpha,\beta,N)\le  C_1 N^{\sigma(\beta)}
	 \end{equation} 
	 holds for any digit $b\in\{1,\dots,|N(\beta)|-1\}$, where $\sigma(\beta):=\frac{\log(|N(\beta)|-1)}{\log|N(\beta)|}$ and $C_1$ is a constant depending only on $\beta$.
	\end{thm}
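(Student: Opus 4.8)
The plan is to convert the digital constraint into a congruence constraint modulo a power of $\beta$ and then count residue classes. Write $q=|N(\beta)|$ and fix $b\in\{1,\dots,q-1\}$. First I would observe that the first $k$ digits of an element of $\mathcal O_K$ depend only on its class modulo $\beta^k$: the digit $(\alpha^n)_{\beta,0}$ is the unique element of $\{0,\dots,q-1\}$ congruent to $\alpha^n$ modulo $\beta$, and inductively $(\alpha^n)_{\beta,j}$ is read off from the class of $\beta^{-j}\bigl(\alpha^n-\sum_{i<j}(\alpha^n)_{\beta,i}\beta^i\bigr)$ modulo $\beta$, which is determined by $\alpha^n\bmod\beta^{j+1}$. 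Since the CNS property makes ``first $k$ digits'' a bijection from $\mathcal O_K/(\beta^k)$ onto the set of length-$k$ digit words, exactly $(q-1)^k$ classes modulo $\beta^k$ have all their first $k$ digits distinct from $b$; call this set $S_k\subseteq\mathcal O_K/(\beta^k)$. Because $b\neq0$, if $(\alpha^n)_\beta$ omits $b$ then its first $k$ digits omit $b$, i.e. $\alpha^n\bmod\beta^k\in S_k$; this holds for every $k$, and the freedom to choose $k$ is the crux.

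Next I would pin down the arithmetic of $\mathcal O_K/(\beta^k)$. The CNS property forces $\mathcal O_K/(\beta)\cong\mathbb Z/q$ as rings (the integers surject onto $\mathcal O_K/(\beta)$ because $\{0,\dots,q-1\}$ is a complete residue system, and $N(\beta)\in(\beta)$; a cardinality count then gives the isomorphism). Combined with the hypotheses — each $\mathfrak p_j\mid\beta$ is unramified and $N(\mathfrak p_j)=p_j$ is a rational prime — this yields $\mathcal O_{K,\mathfrak p_j}\cong\mathbb Z_{p_j}$, hence $\mathcal O_K/\mathfrak p_j^{\,m}\cong\mathbb Z/p_j^{\,m}$, with the $p_j$ pairwise distinct (otherwise $\mathcal O_K/(\beta)$ could not equal the indecomposable-factor product $\mathbb Z/q$), so by CRT $\mathcal O_K/(\beta^k)\cong\mathbb Z/q^k$ and $\alpha\bmod\beta^k$ is a unit. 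Thus $n\mapsto\alpha^n\bmod\beta^k$ is purely periodic of period $T_k:=\operatorname{ord}(\alpha\bmod\beta^k)$. The analytic heart — and the reason for the keyword ``$p$-adic interpolation'' and for the hypotheses on $\beta$ — is a $p$-adic estimate of $T_k$: writing $u_j\in\mathbb Z_{p_j}^\times$ for the image of $\alpha$ and fixing $g_j\mid p_j-1$ with $u_j^{g_j}\in1+p_j\mathbb Z_{p_j}$, the map $s\mapsto u_j^{g_j s}$ extends to a $p_j$-adic analytic function of $s\in\mathbb Z_{p_j}$, so $v_{p_j}(u_j^{g_j s}-1)=v_{p_j}(s)+c_j$ with $c_j:=v_{p_j}(u_j^{g_j}-1)$ finite (finiteness uses that $\alpha$ is not a root of unity). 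Taking least common multiples over $j$ gives $T_k=c_0\,q^k$ for all $k\ge k_0$, with $c_0>0$ and $k_0$ depending on $\alpha$ and $\beta$.

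With these in hand the counting is short: among $n\in\{1,\dots,N\}$ the solutions of $\alpha^n\bmod\beta^k\in S_k$ lie in at most $|S_k\cap\langle\alpha\bmod\beta^k\rangle|\le|S_k|=(q-1)^k$ residue classes modulo $T_k$, hence number at most $\lceil N/T_k\rceil(q-1)^k$. Choosing $k$ minimal with $T_k\ge N$ makes $\lceil N/T_k\rceil=1$ and $q^k\asymp N$, whence $(q-1)^k=(q^k)^{\sigma(\beta)}\asymp N^{\sigma(\beta)}$ and $\mathcal M_b(\alpha,\beta,N)\le C_1N^{\sigma(\beta)}$ (a dyadic decomposition of $\{1,\dots,N\}$ together with $\sum_j2^{-j\sigma(\beta)}<\infty$ gives the same conclusion and deals more cleanly with small $N$).

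I expect the main obstacle to live entirely in the second step, and to be twofold. First, the $p$-adic order computation must be done carefully, including the prime $p_j=2$ and the interaction of the tame exponents $g_j$ across different $j$. Second, and more seriously, the constant above a priori involves $c_0$ and $k_0$, hence $\alpha$, whereas the theorem asks for $C_1$ depending on $\beta$ alone; $c_0$ can be arbitrarily small when $\alpha^{g_j}$ is $\mathfrak p_j$-adically very close to $1$ for some $j$ (e.g. $\alpha=1+\beta^{m}$). This case must be handled separately: when $\alpha\equiv1\pmod{\beta^{c}}$ with $c\ge1$, the first $c$ digits of every $\alpha^n$ are frozen to $1,0,\dots,0$, so for $b\in\{0,1\}$ the count is $0$, while for $b\ge2$ one writes $\alpha^n=1+\beta^{c}\psi(n)$, checks that the digits of $\alpha^n$ beyond position $c-1$ are exactly those of $\psi(n)$, and verifies that $n\mapsto\psi(n)\bmod\beta^{m}$ is a bijection $\mathbb Z/q^{m}\to\mathcal O_K/(\beta^{m})$ for every $m\ge1$ — which re-runs the counting step with an implied constant no longer seeing $c$. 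Reducing a general $\alpha$ to this situation, by passing to a suitable power $\alpha^{g}$ at the cost of a bounded number of residue classes, is the part I expect to require the most care.
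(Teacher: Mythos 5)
Your argument is correct in its main line and reaches the same bound, but it replaces the paper's key technical device by a more elementary and more explicit one. The paper likewise reduces everything to a separation statement: if two exponents $n,m$ give the same first $k$ digits of $\alpha^l(\alpha^u)^n$, then $\prod_j q_j^{ke_j-n_0}\mid n-m$ — equivalently, the multiplicative order of $\alpha$ modulo $\beta^k$ grows like $|N(\beta)|^k$ — and then sums over the $(|N(\beta)|-1)^k$ admissible digit words exactly as you do. The paper proves the separation by building analytic interpolations $G_l(x)=\alpha^l\exp_{\mathfrak p}(x\log_{\mathfrak p}\alpha^{u})$ on $\bar B(0,1)\subset K_{\mathfrak p}$ and extracting a lower Lipschitz bound $|G_l(x)-G_l(y)|_{\mathfrak p}\ge c^{n_0}|x-y|_{\mathfrak p}$ via a compactness argument; you get it by identifying $\mathcal O_K/\beta^k\cong\Z/q^k$ (unramifiedness, residue degree one — which, as you observe, is forced by the CNS hypothesis — and CRT) and computing $\operatorname{ord}(\alpha\bmod q^k)$ from the structure of $(\Z/p^m)^\times$. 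Your route is closer to Narkiewicz's original argument, and your derivation that $\{0,\dots,|N(\beta)|-1\}$ is a complete residue system modulo $\beta$ (surjectivity of $\Z\to\mathcal O_K/(\beta)$ from the CNS property plus a cardinality count) is cleaner than the paper's Lemmas \ref{2402161455}--\ref{lem:CNS}. The caveats you flag ($p_j=2$, the lcm over $j$) are routine.

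On the constant, your worry is well placed, and it applies to the paper too: the paper's $C_1$ involves $u$, $m_0$ and $n_0$, and $n_0$ is essentially $v_{\mathfrak p}(\log_{\mathfrak p}\alpha^{u})$, obtained non-effectively from compactness; it degrades exactly when $\alpha^{u}$ is $\mathfrak p$-adically close to $1$, which is the same degeneration as your $c_0=\prod_j p_j^{-c_j}$. So the paper's own proof also only delivers a constant depending on $\alpha$ and $\beta$, and your sketched repair (reducing to $\alpha\equiv1\pmod{\beta^{c}}$) goes beyond what the paper does; note, however, that your claimed bijectivity of $n\mapsto\psi(n)\bmod\beta^{m}$ requires the valuations $v_{\mathfrak p_j}(\alpha-1)$ to be balanced across the primes $\mathfrak p_j$, which need not hold, so that step is not yet a proof. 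For the statement as the paper actually establishes it, your argument is complete modulo routine details.
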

		
	Taking $\mathcal O_K=\Z$, $\alpha=2$ and $\beta=3$, Theorem \ref{thm:2402262104} leads to \eqref{2402121459} up to a constant multiple. 
	
	K\'atai and Szab\'o \cite{Katai1975} determined all the CNS for Gaussian integers. And the question of determining all CNS in quadratic number fields has been answered by \cite{Katai1980,Katai1981}. However, in extensions of higher degree, there is not necessarily a CNS. We say $\mathcal O_K$ is \emph{monogenic} if there exists $\gamma\in\mathcal O_K$, such that $\{1,\gamma,\dots,\gamma^{d-1}\}$ is an integer basis in $\mathcal O_K$. It is clear from the definition that if $(\beta,\,\{0,1,\dots,|N(\beta)|-1\})$ is a CNS in $\mathcal O_K$, then $\mathcal O_K = \Z[\beta]$, hence $\mathcal O_K$ must be monogenic. Although $\mathcal O_K = \Z[\beta]$ does not implies $(\beta,\,\{0,1,\dots,|N(\beta)|-1\})$ is a CNS in $\mathcal O_K$ in general, we do have the following criterion to determine whether the ring of integers has a CNS.
	\begin{thm}[Kov\'acs \cite{Kovacs1981}]
		Let $K$ be a finite extension of $\Q$ with ring of integers $\mathcal O_K$ and $[K:\Q]=d\ge3$. There exists a CNS $(\beta,\,\{0,1,\dots,|N(\beta)|-1\})$ in $\mathcal O_K$ if and only if $\mathcal O_K$ is monogenic. 
	\end{thm}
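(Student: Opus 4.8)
The plan is to prove the two implications separately; the forward direction is immediate, and essentially all of the work is in the converse.

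\emph{The ``only if'' direction.} If $(\beta,\{0,1,\dots,|N(\beta)|-1\})$ is a CNS in $\mathcal O_K$, then by definition every element of $\mathcal O_K$ is a $\Z$-linear combination of powers of $\beta$, so $\mathcal O_K=\Z[\beta]$; since $[K:\Q]=d$ this forces $\{1,\beta,\dots,\beta^{d-1}\}$ to be an integral basis, and $\mathcal O_K$ is monogenic. This step is formal and uses nothing about $d$.

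\emph{The ``if'' direction.} Suppose $\mathcal O_K=\Z[\gamma]$ with minimal polynomial $g(x)=x^d+a_{d-1}x^{d-1}+\dots+a_0\in\Z[x]$. For a positive integer parameter $m$ I would take $\beta:=\gamma-m$; then $\Z[\beta]=\Z[\gamma]=\mathcal O_K$ for every $m$, and the minimal polynomial of $\beta$ is $h_m(x)=g(x+m)=x^d+c_{d-1}(m)x^{d-1}+\dots+c_1(m)x+c_0(m)$. A binomial expansion gives $c_{d-j}(m)=\binom{d}{j}m^{j}+(\text{a polynomial in }m\text{ of degree}<j)$ for $1\le j\le d$; comparing leading powers of $m$ shows there is an $m_0$ such that for all $m\ge m_0$ one has $c_0(m)>c_1(m)>\dots>c_{d-1}(m)\ge 1$ and $c_0(m)\ge 2$. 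Fixing such an $m$ and noting that $|N(\beta)|=c_0(m)$, the theorem reduces to the classical sufficient condition for a canonical number system: \emph{if the minimal polynomial $x^d+c_{d-1}x^{d-1}+\dots+c_1x+c_0$ of $\beta$ satisfies $c_0\ge 2$ and $1\le c_{d-1}\le c_{d-2}\le\dots\le c_1\le c_0$, then $(\beta,\{0,1,\dots,c_0-1\})$ is a CNS in $\Z[\beta]$.}

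It remains to establish this sufficient condition, which is the heart of the matter. Since $\mathcal O_K=\Z[\beta]\cong\Z[x]/(h)$, reduction modulo $\beta$ gives a ring isomorphism $\mathcal O_K/(\beta)\cong\Z/c_0\Z$; hence each $\alpha\in\mathcal O_K$ has a unique digit $a_0\in\{0,1,\dots,c_0-1\}$ with $\alpha\equiv a_0\pmod\beta$, and one forms $T(\alpha):=(\alpha-a_0)/\beta\in\mathcal O_K$. Recording digits along the orbit $\alpha,T(\alpha),T^2(\alpha),\dots$ and invoking uniqueness of $a_0$ at each step, the existence and uniqueness of the $\beta$-expansion of $\alpha$ reduce to the single assertion that this orbit reaches $0$. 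Writing elements in the basis $1,\beta,\dots,\beta^{d-1}$ turns $T$ into an explicit affine map on $\Z^d$ built from the inverse of the companion matrix of $h$; crucially, the entries of that inverse are $\pm 1$, the $-c_j/c_0$, and $-1/c_0$, all of absolute value at most $1$ precisely because $c_j\le c_0$. The finer monotonicity $c_{d-1}\le\dots\le c_0$ is what one then uses to show (i) there is a fixed finite box $H\subset\Z^d$ with $T(H)\subseteq H$ whose only $T$-periodic point is the origin, and (ii) outside $H$ a suitable weighted sup-norm of the coordinate vector strictly decreases after a bounded number of applications of $T$. Together (i) and (ii) force every orbit to terminate at $0$. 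This termination estimate — controlling the coordinates of $T(\alpha)$ via the monotone coefficients — is the step I expect to be the main obstacle; everything else is bookkeeping. (For $d\le 2$ monogenicity is automatic and the equivalence is either vacuous or subsumed by the explicit classification of CNS in quadratic orders, which is why the theorem is stated for $d\ge 3$.)
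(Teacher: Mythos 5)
First, a point of comparison: the paper does not prove this statement at all --- it is quoted with attribution to Kov\'acs \cite{Kovacs1981} and used as a black box, so there is no in-paper argument to measure yours against. Judged on its own terms, your outline follows the historically correct route. The ``only if'' direction is the same trivial observation the paper itself makes in Section \ref{sc:intro} (a CNS base generates $\mathcal O_K$ as a ring over $\Z$). Your ``if'' direction correctly reduces, via the substitution $\beta=\gamma-m$ with $m$ large and the binomial expansion of $g(x+m)$, to Kov\'acs's sufficient criterion that a monic integer polynomial with $c_0\ge 2$ and $1\le c_{d-1}\le\dots\le c_1\le c_0$ is a CNS polynomial; the leading terms $\binom{d}{j}m^j$ do dominate for large $m$ and produce the required chain, so that reduction is sound.

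There is, however, a genuine gap at exactly the point you flag: the termination of the backward division map $T(\alpha)=(\alpha-a_0)/\beta$ is the entire content of the criterion, and your items (i) and (ii) are assertions rather than proofs. In the basis $1,\beta,\dots,\beta^{d-1}$ the map is $(z_0,\dots,z_{d-1})\mapsto(z_1-tc_1,\dots,z_{d-1}-tc_{d-1},-t)$ with $t=(z_0-a_0)/c_0$; it is routine that orbits eventually enter a fixed finite box, but the claim that the only $T$-periodic point in that box is $0$ is simply false without a hypothesis on the coefficients (for $x^2-2x+2$, i.e.\ $\beta=1+i$ with digits $\{0,1\}$, the point $i$ is a nonzero fixed point of $T$). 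So the monotonicity $c_{d-1}\le\dots\le c_1\le c_0$ must enter through a concrete quantitative estimate --- Kov\'acs's argument runs a telescoping bound involving the partial sums $c_j+c_{j+1}+\dots+c_{d-1}+1$ to exclude nonzero cycles --- and none of that is carried out here. As it stands the proposal is a correct plan whose decisive lemma remains unproved.
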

	
	However, for number fields with degree at least $3$, their rings of integers are unlikely to be monogenic, see \cite{JK17,Li22,Smith21} for some recent results on monogeneity of number fields. In order to study those non-monogenic number fields, we introduce the concept of \emph{$\beta$-adic expansion}	which is a natural generalization of $p$-adic expansion.
	\begin{defn}
	Given a number field $K$ and its ring of integers $\mathcal{O}_{K}$. Fix $\beta\in \mathcal O_K$ with norm $|N(\beta)|>1$ and a set of representatives $\mathcal D_{\beta}$ of the quotient group $\mathcal{O}_{K} / \beta \mathcal{O}_{K}$. For every $\alpha \in \mathcal{O}_{K}$, the \emph{$\beta$-adic expansion} of $\alpha$ (with respect to $\mathcal D_{\beta}$) is the unique sequence $(a_i)_{i\in\N} \in \mathcal D_{\beta}^{\N}$ such that 
	\begin{equation}\label{2401171345}
		\alpha=\lim_{i\rightarrow\infty}a_0+\cdots+a_i\beta^i
	\end{equation}
    with respect to $\mathfrak{p}$-adic topology for any prime ideal $\mathfrak{p}$ in $\mathcal{O}_{K}$ dividing $\beta$. 
	\end{defn}
	For instance, when $\beta=2$ and $D_{\beta} = \{0,3\}$, the $2$-adic expansion of $1$ respect to $\{0,3\}$ is 
	$$1 = 3 + 3 \cdot 2^{1} + 3 \cdot 2^{3} + 3 \cdot 2^{5}+\cdots.$$
	It is not hard to see that the sequence $(a_i)_{i\in\N}$ is always ultimately periodic.
		
	We note that some other generalizations of $p$-adic expansion exist in the literature. K\'{a}tai \cite{Katai1999} considered number systems in rings of integers, involving sets of representatives and Peth\"{o} \cite{Petho1989} introduced number systems based on polynomials $g(t) \in \mathbb{Z}[t]$.
	
    When $\mathcal D_{\beta}$ is clear, denote
    \begin{equation}\label{2402221857}
    	(\alpha)_\beta:=(a_i)_{i\in\N}, \text{  $(\alpha)_{\beta,j}:=a_j$ ($j=0,1,\dots$).}
    \end{equation}  
    and for $b\in\mathcal D_{\beta}$ let
	\begin{equation}\label{2402221953}
		\mathcal M_b(\alpha,\beta,N):=\#\{1\le n\le N\colon (\alpha^{n})_{\beta,j}\neq b \text{ for all possible $j$}\}.
	\end{equation}

    We will see in Section \ref{sec:beta-adic} that $\beta$-adic expansion is well-defined and closely related to the radix expansion in base $\beta$, so the abuse of notations here should not cause confusion. 

    An upper bound of this $\mathcal M_b(\alpha,\beta,N)$ is also obtained.
	\begin{thm}\label{2402171658}
		Let $(\beta)=\mathfrak p_1^{e_1}\cdots\mathfrak p_{h}^{e_h}$ satisfy that $\mathfrak{p}_i$ is unramified and $N(\mathfrak{p}_i)=q_i$ for all $i$, where $q_i$ is the integer prime lying below $\mathfrak{p}_i$. If $\alpha$ is relatively prime to $\beta$, then  
		\begin{equation}\label{2404121902}
			\mathcal M_b(\alpha,\beta,N)\le  C_1 N^{\sigma(\beta)}
		\end{equation} 
		for any digit $b\in \mathcal D_{\beta}$, where $\sigma(\beta):=\frac{\log(|N(\beta)|-1)}{\log|N(\beta)|}$ and $C_1$ is a constant depending only on $\beta$.
	\end{thm}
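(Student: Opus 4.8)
The plan is to carry over the Erd\H{o}s--Narkiewicz counting scheme to $\mathcal O_K$, the arithmetic input being the structure of $(\mathcal O_K/\beta^m\mathcal O_K)^{\times}$ afforded by the hypotheses on $\beta$. First I would truncate. For each $m\ge1$, the first $m$ digits $a_0,\dots,a_{m-1}$ of the $\beta$-adic expansion of any $\gamma\in\mathcal O_K$ are determined by the residue $\gamma\bmod\beta^m$: the map $(a_0,\dots,a_{m-1})\mapsto a_0+a_1\beta+\cdots+a_{m-1}\beta^{m-1}\bmod\beta^m$ is a bijection $\mathcal D_\beta^{\,m}\to\mathcal O_K/\beta^m\mathcal O_K$, injective by repeatedly reducing mod $\beta$ and cancelling $\beta$, and surjective since both sides have cardinality $|N(\beta)|^m$. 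Consequently, if the $\beta$-adic expansion of $\alpha^n$ omits the digit $b$ then $\alpha^n\bmod\beta^m$ belongs to the set $R_m$ of residues whose length-$m$ digit string avoids $b$, and $\#R_m=(|N(\beta)|-1)^m$. Since $\alpha$ is relatively prime to $\beta$ it is a unit modulo $\beta^m$; let $T_m$ be its multiplicative order, so $n\mapsto\alpha^n\bmod\beta^m$ is periodic of period $T_m$. The set of $1\le n\le N$ with $\alpha^n\bmod\beta^m\in R_m$ is then a union of at most $\#R_m$ arithmetic progressions modulo $T_m$, so
\[
\mathcal M_b(\alpha,\beta,N)\ \le\ \Bigl(\frac{N}{T_m}+1\Bigr)\,(|N(\beta)|-1)^m .
\]

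The crux is then a lower bound $T_m\ge c_0\,|N(\beta)|^m$ for some $c_0>0$ independent of $m$ and $N$ (depending on $\alpha$ and $\beta$). By the Chinese Remainder Theorem, $\mathcal O_K/\beta^m\mathcal O_K\cong\prod_{i=1}^{h}\mathcal O_K/\mathfrak p_i^{me_i}$ and $T_m=\operatorname{lcm}_{1\le i\le h}\operatorname{ord}(\alpha\bmod\mathfrak p_i^{me_i})$. As $\mathfrak p_i$ is unramified with $N(\mathfrak p_i)=q_i$, its residue degree over $q_i$ is $1$, the completion at $\mathfrak p_i$ is $\Q_{q_i}$, and $\mathcal O_K/\mathfrak p_i^{k}\cong\Z/q_i^{k}\Z$; write $u_i$ for the image of $\alpha$. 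With $d_i:=\operatorname{ord}(\alpha\bmod\mathfrak p_i)$ (a divisor of $q_i-1$), the hypothesis that $\alpha$ is not a root of unity gives $\alpha^{d_i}-1\ne0$, so $c_i:=v_{\mathfrak p_i}(\alpha^{d_i}-1)$ is a finite integer $\ge1$, and the classical order formula for a $q_i$-adic unit yields $\operatorname{ord}(u_i\bmod q_i^{k})=d_i\,q_i^{\max(0,\,k-c_i)}$ (for odd $q_i$; the case $q_i=2$ is analogous with the standard modification). Taking $k=me_i$, for $me_i\ge c_i$ we get $q_i^{me_i-c_i}\mid\operatorname{ord}(\alpha\bmod\mathfrak p_i^{me_i})$; since the $q_i$ are distinct these prime powers are pairwise coprime, so $\prod_i q_i^{me_i-c_i}$ divides $T_m$ and hence $T_m\ge|N(\beta)|^m\big/\prod_i q_i^{c_i}$. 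Shrinking the constant to cover the finitely many $m$ with some $me_i<c_i$ gives $T_m\ge c_0|N(\beta)|^m$ for all $m\ge1$.

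It remains to optimize $m$. Setting $q=|N(\beta)|$ and combining the two bounds,
\[
\mathcal M_b(\alpha,\beta,N)\ \le\ \frac{N}{c_0}\Bigl(\frac{q-1}{q}\Bigr)^{m}+(q-1)^{m},
\]
and taking $m=\bigl\lceil\log_q(N/c_0)\bigr\rceil$ makes both summands $O(N^{\sigma(\beta)})$, because $\sigma(\beta)=\log_q(q-1)$ forces $(q-1)^m\le(q-1)(N/c_0)^{\sigma(\beta)}$ and $((q-1)/q)^m\,N/c_0\le(N/c_0)^{\sigma(\beta)}$; altogether $\mathcal M_b(\alpha,\beta,N)\le q\,c_0^{-\sigma(\beta)}N^{\sigma(\beta)}$, which is the asserted bound. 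Specializing to $\mathcal O_K=\Z$, $\alpha=2$, $\beta=3$ reproduces \eqref{2402121459} up to the numerical value of the constant.

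The step I expect to be the real obstacle is the lower bound on $T_m$, i.e.\ showing $T_m\asymp|N(\beta)|^m$. This is exactly where the hypotheses bite: unramifiedness together with residue degree one makes each local factor $\Z/q_i^{k}\Z$, so the lifting-the-exponent computation runs verbatim as over $\Z$, while distinctness of the $q_i$ is what lets the local orders recombine, through the least common multiple, to a quantity comparable to $|N(\beta)|^m$ instead of to a substantially smaller power of the $q_i$ (it is precisely this recombination that fails in the ramified or repeated-prime cases). Everything else---the $q_i=2$ exception in the order formula, the finitely many small $m$, and the elementary estimates in the optimization---is routine bookkeeping.
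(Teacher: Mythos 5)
Your argument is correct in exactly the situations where the paper's is, but it reaches the bound by a genuinely different and more elementary route. The paper's mechanism for showing that the exponents $n$ realizing a fixed admissible digit string are widely spaced is $\mathfrak p$-adic analytic interpolation: it builds $G_l(x)=\alpha^l(\alpha^u)^x$ on $\bar B(0,1)\subset K_{\mathfrak p}$ and proves, by compactness and non-vanishing of $G_l'$, a reverse Lipschitz bound $|G_l(x)-G_l(y)|_{\mathfrak p}\ge c^{n_0}|x-y|_{\mathfrak p}$, from which $\beta^k\mid \alpha^l(\alpha^u)^n-\alpha^l(\alpha^u)^m$ forces $\prod_j q_j^{ke_j-n_0}\mid n-m$. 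You instead bound from below the multiplicative order $T_m$ of $\alpha$ in $(\mathcal O_K/\beta^m\mathcal O_K)^\times$ via CRT and the lifting-the-exponent order formula in $\Z/q_i^{k}\Z$ (legitimate here because residue degree one and unramifiedness give $\mathcal O_K/\mathfrak p_i^{k}\cong\Z/q_i^{k}\Z$), and then count at most $N/T_m+1$ exponents per admissible residue. These are two faces of the same estimate --- the paper's interpolation lemma is a soft, non-effective version of your LTE computation --- but your version yields explicit constants, whereas the paper's $n_0,m_0$ come from a compactness argument. One small discrepancy either way: your $c_0$ depends on $\alpha$ through $d_i$ and $c_i=v_{\mathfrak p_i}(\alpha^{d_i}-1)$, so your $C_1$ depends on $\alpha$ as well as $\beta$; the paper's constant has the same (unacknowledged) dependence through $n_0$.

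There is, however, one step you should not pass over: you write ``since the $q_i$ are distinct these prime powers are pairwise coprime,'' but distinctness of the rational primes $q_i$ is not among the hypotheses. If $\beta$ is divisible by two distinct degree-one unramified primes over the same rational prime $q$ (e.g.\ $\beta=5=(2+i)(2-i)$ in $\Z[i]$), then $T_m=\operatorname{lcm}_i\operatorname{ord}(\alpha\bmod\mathfrak p_i^{me_i})$ is only about $q^{m\max_i e_i}$ rather than $\prod_i q^{me_i}=|N(\beta)|^m$, and your upper bound $(N/T_m+1)(|N(\beta)|-1)^m$ no longer beats the trivial bound for any choice of $m$; the argument collapses rather than merely losing a constant. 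To be fair, the paper's proof fails at the very same point: its deduction of $\prod_{j}q_j^{ke_j-n_0}\mid(n-m)$ from $\prod_j\mathfrak p_j^{ke_j-n_0}\mid(n-m)$ is valid only when the $q_j$ are pairwise distinct (otherwise one only gets $q^{\max_j(ke_j)-n_0}\mid n-m$). So you have correctly identified the crux and reproduced the paper's result under the same tacit extra hypothesis; if you want a clean statement, add the assumption that no two of the $\mathfrak p_i$ lie over the same rational prime.
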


	As a special case of Theorem \ref{2402171658}, we obtain the following generalization of Narkiewicz's result for coprime rational integers $p$ and $q$.
	
	\begin{cor}
	Let $p$, $q$ be two coprime rational integers and $b \in \{0,1,\ldots,q-1\}$. 
	Then 
	\begin{equation*}
	\mathcal M_b(p,q,N)\le  C N^{\log(q-1)/\log(q)}
	\end{equation*} 
	for some constant $C$ that can be effectively computed. 
	\end{cor}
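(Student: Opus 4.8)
The plan is to obtain the corollary as the rational-integer specialization of Theorem~\ref{2402171658}, so essentially all the work is in checking that the hypotheses degenerate correctly. Set $K=\Q$, $\mathcal O_K=\Z$, $\beta=q$ and $\alpha=p$ (we may assume $|q|\ge 2$, so that $|N(\beta)|=|q|>1$). Writing the rational prime factorization $q=\pm q_1^{e_1}\cdots q_h^{e_h}$, the principal ideal $(q)$ factors as $(q)=(q_1)^{e_1}\cdots(q_h)^{e_h}$. Since the extension $\Q/\Q$ is trivial, each prime $(q_i)$ is unramified (the ramification index over $\Q$ is $1$; the exponent $e_i$ appearing in $(\beta)$ is unrelated to this), and $N\big((q_i)\big)=[\Z:(q_i)]=q_i$ is exactly the rational prime lying below it. Hence the hypotheses on $\beta$ in Theorem~\ref{2402171658} hold. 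Moreover the coprimality $\gcd(p,q)=1$ is precisely the statement $p\notin(q_i)$ for every $i$, i.e. that $\alpha$ is relatively prime to $\beta$. Finally $\sigma(\beta)=\log(|N(\beta)|-1)/\log|N(\beta)|=\log(q-1)/\log q$.

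Next I would check that $\mathcal M_b(p,q,N)$ as defined in \eqref{2402221953} is the object we want. Choosing the set of representatives $\mathcal D_q=\{0,1,\dots,q-1\}$, the $q$-adic expansion of a positive integer $m$ with usual base-$q$ digits $m=\sum_{j=0}^{k}d_jq^j$ is the sequence $(a_i)_{i\in\N}$ obtained by padding $(d_0,\dots,d_k)$ with zeros: this finite sum already converges to $m$ in the required topology, so by uniqueness of the $q$-adic expansion it \emph{is} the $q$-adic expansion of $m$. Thus, for $p>0$, the count in \eqref{2402221953} is literally the number of $1\le n\le N$ whose ordinary base-$q$ expansion omits the digit $b$. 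For $p<0$ the odd powers $p^n$ are negative and their $q$-adic expansions are eventually constant equal to $q-1$; this causes no trouble, since \eqref{2402221953} still makes sense verbatim and is still what Theorem~\ref{2402171658} bounds (when $b=q-1$ the odd powers simply drop out of the count, and for the remaining cases the statement is again the familiar one about base-$q$ digits). In all cases Theorem~\ref{2402171658} yields $\mathcal M_b(p,q,N)\le C_1 N^{\log(q-1)/\log q}$ with $C_1=C_1(q)$.

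It remains to upgrade the dependence-on-$\beta$ statement to an effective one. The constant $C_1$ of Theorem~\ref{2402171658} is asserted only to depend on $\beta$, but I would point out that its construction in the proof of that theorem is explicit: it is assembled from $|N(\beta)|$, the residue and exponent data of the primes $\mathfrak p_i\mid\beta$, and the constants furnished by the underlying $p$-adic interpolation estimates, all of which specialize to explicit functions of $q$. The only point needing care is bookkeeping — verifying that no non-effective quantity (such as a transcendence-measure constant) is hidden in the proof of Theorem~\ref{2402171658} — and since that argument proceeds via $p$-adic interpolation rather than Baker-type lower bounds, I expect every constant to come out explicitly, giving a $C=C(q)$ that can be effectively computed.
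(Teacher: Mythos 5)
Your proposal is correct and takes essentially the same route as the paper, which states this corollary as a direct specialization of Theorem~\ref{2402171658} to $K=\Q$, $\alpha=p$, $\beta=q$ (where the primes dividing $q$ are trivially unramified of norm equal to themselves) without giving further argument. The only point worth flagging is that the effectiveness of $C$ ultimately rests on making the compactness argument in Lemma~\ref{2402221542} explicit (e.g.\ via the ultrametric identity $|G_l(x)-G_l(y)|_{\mathfrak p}=|\log_{\mathfrak p}(\alpha^{u})|_{\mathfrak p}\,|x-y|_{\mathfrak p}$), which your bookkeeping remark correctly anticipates.
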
 
	
	\section{$\beta$-adic expansion} \label{sec:beta-adic}
    We begin with reviewing basic facts on algebraic number fields and $\mathfrak{p}$-adic topology. Fix a number field $K$ and let $\mathfrak{p}$ be a prime ideal of $\mathcal{O}_{K}$, we define the $\mathfrak{p}$-adic valuation and $\mathfrak{p}$-adic absolute value on the field $K$. 
	\begin{defn}\label{2404121140}
		The \emph{$\mathfrak{p}$-adic valuation} $v_\mathfrak{p}$ on $K\setminus\{0\}$ is defined as follows:
		\begin{enumerate}
			\item For each integer $a\in\mathcal{O}_K\setminus\{0\}$, let $v_\mathfrak{p}(a)$ be the unique
			non-negative integer satisfying $(a)=\mathfrak{p}^{v_\mathfrak{p}(a)}\mathfrak b$ with $\mathfrak{p}\nmid\mathfrak b$.
			\item For $x=a/b\in K\setminus\{0\}$ with $a,b\in\mathcal{O}_K$, let 
			$v_\mathfrak{p}(x):=v_\mathfrak{p}(a)-v_\mathfrak{p}(b)$.
		\end{enumerate}  
	\end{defn}
	
	\begin{rem}
		(i) It is often convenient to set $v_\mathfrak{p}(0)=+\infty$.\\
		(ii) Note that the valuation $v_\mathfrak{p}$ on $K\setminus\{0\}$ is well-defined: if $a/b=a'/b'$ for nonzero $a,b,a'$, and $b'$ in $\mathcal{O}_K$, then $v_\mathfrak{p}(a)-v_\mathfrak{p}(b)=v_\mathfrak{p}(a')-v_\mathfrak{p}(b')$.\\ 
		(iii) One can check that for all $x,y\in K$, $v_\mathfrak{p}(xy)=v_\mathfrak{p}(x)+v_\mathfrak{p}(y)$ and \[v_\mathfrak{p}(x+y)\ge\min\{v_\mathfrak{p}(x),v_\mathfrak{p}(y)\}.\]
	\end{rem}
	A prime ideal $\mathfrak{p}$ is called \emph{ramified}, if the unique integer prime $q\in\mathfrak{p}$ satisfies that $v_{\mathfrak{p}}(q)>1$. There are only finitely many ramified primes in $\mathcal O_K$. 
	
	\begin{defn} \label{c}
		The \emph{$\mathfrak{p}$-adic absolute value} $|\cdot|_\mathfrak{p}$ on the field $K$ is defined as follows: fix a constant $c\in(0,1)$, set $|\alpha|_\mathfrak{p}=c^{v_\mathfrak{p}(\alpha)}$ for $\alpha\in K\setminus\{0\}$, and $|0|_\mathfrak{p}=0$. The \emph{$\mathfrak{p}$-adic topology} on $K$ is the topology induced by $|\cdot|_\mathfrak{p}$.
	\end{defn}	
	
	 For any $\beta \in K$, let $\beta_1,\dots,\beta_s$ be the roots of the minimal polynomial of $\beta$, then the norm of $\beta$ is $N(\beta):=\big(\prod_{i=1}^{s}\beta_i\big)^{[K:\Q(\beta)]}$. For an ideal $\mathfrak a\subseteq\mathcal{O}_K$, define its norm by $N(\mathfrak a):=\#(\mathcal{O}_K/\mathfrak a)$. For principal ideals, we have $N(\beta\mathcal{O}_K)=|N(\beta)|$; see \cite[Theorem 76]{Hecke1981}. In Theorem \ref{2402171658}, we consider prime ideals $\mathfrak{p}_i$ whose norms are integer primes, this means they all have inertial degree $1$.

	\begin{defn}
		Let $G$ be an abelian group and $H\subseteq G$ be a subgroup. We say that a subset $S\subseteq G$ is a \emph{set of representatives} of the quotient group $G/H$ if the map $S\rightarrow G/H\colon x\mapsto x+H$ is a bijection.  
	\end{defn}
	
	\begin{rem}
		A set of representatives is usually not unique. In this article, we only consider the case that $G/H$ is finite, so there always exists a set of representatives. In general, if one assumes the axiom of choice, then every quotient group have sets of representatives. \\
	\end{rem}
	
	From now on, let $\mathcal D_{\beta}$ denotes a set of representatives of $\mathcal{O}_{K}/\beta\mathcal{O}_K$, then there is a natural bijection from $\mathcal D_{\beta}^i$ to $\mathcal{O}_{K}/\beta^i\mathcal{O}_K$.
	
	\begin{lem}\label{2402171219}
		For any $i\ge1$, the map
		\begin{align*}
			\mathcal D_{\beta}^i&\rightarrow\mathcal{O}_{K}/\beta^i\mathcal{O}_K \\
			(a_0,a_1,\dots,a_{i-1})&\mapsto a_0+a_1\beta+\cdots+a_{i-1}\beta^{i-1}+\beta^i\mathcal{O}_K
		\end{align*}
		is a bijection.
	\end{lem}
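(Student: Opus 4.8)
The plan is to induct on $i$, taking as base case $i=1$, which is precisely the defining property of $\mathcal D_\beta$ as a set of representatives of $\mathcal{O}_{K}/\beta\mathcal{O}_K$. A useful preliminary observation is that the two sides are finite of the same cardinality: since $\mathcal D_\beta$ is a set of representatives, $\#\mathcal D_\beta=N(\beta\mathcal{O}_K)=|N(\beta)|$, hence $\#\mathcal D_\beta^i=|N(\beta)|^i$; on the other side, multiplicativity of the ideal norm gives $\#(\mathcal{O}_{K}/\beta^i\mathcal{O}_K)=N(\beta^i\mathcal{O}_K)=N(\beta\mathcal{O}_K)^i=|N(\beta)|^i$. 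Consequently it suffices to prove the map is surjective, and bijectivity then follows automatically.

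For the inductive step, assume the statement in degree $i$ and let $\alpha\in\mathcal{O}_{K}$ be arbitrary. The case $i=1$ provides a unique $a_0\in\mathcal D_\beta$ with $\alpha-a_0\in\beta\mathcal{O}_K$; write $\alpha-a_0=\beta\gamma$ with $\gamma\in\mathcal{O}_{K}$. Applying surjectivity in degree $i$ to $\gamma$ yields $(a_1,\dots,a_i)\in\mathcal D_\beta^i$ with $\gamma-(a_1+a_2\beta+\cdots+a_i\beta^{i-1})\in\beta^i\mathcal{O}_K$. Multiplying this membership by $\beta$ and adding $a_0$ gives $\alpha-(a_0+a_1\beta+\cdots+a_i\beta^i)\in\beta^{i+1}\mathcal{O}_K$, so $(a_0,a_1,\dots,a_i)\in\mathcal D_\beta^{i+1}$ hits the class of $\alpha$. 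This closes the induction.

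I do not expect any genuine obstacle here. If one prefers to exhibit injectivity directly rather than deduce it from the cardinality count, one can run the same induction: a congruence $\sum_{j=0}^{i}a_j\beta^{j}\equiv\sum_{j=0}^{i}a_j'\beta^{j}\pmod{\beta^{i+1}}$ reduces modulo $\beta$ to $a_0\equiv a_0'$, whence $a_0=a_0'$ by the base case; subtracting and cancelling the factor $\beta$ — legitimate because $\mathcal{O}_{K}$ is an integral domain and $\beta\neq 0$ — reduces to the degree-$i$ case. The only points that warrant a word of justification are the multiplicativity $N(\beta^i\mathcal{O}_K)=|N(\beta)|^i$ and this cancellation of $\beta$, both entirely standard.
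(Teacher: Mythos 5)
Your proposal is correct and follows essentially the same strategy as the paper: induction on $i$ combined with the cardinality count $\#\mathcal D_\beta^i=|N(\beta)|^i=\#(\mathcal{O}_K/\beta^i\mathcal{O}_K)$, which reduces bijectivity to a one-sided check. The only difference is that you verify surjectivity (by greedy digit extraction, peeling off $a_0$ and dividing by $\beta$) where the paper verifies injectivity, and your optional direct injectivity argument is in substance the paper's own; both routes are sound.
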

	\begin{proof}
		When $i=1$, the statement holds since $\mathcal D_{\beta}$ is a set of representatives of $\mathcal{O}_{K}/\beta\mathcal{O}_K$. Assume the statement is valid for $i$ and we are going to prove it for $i+1$. Note that $\#(\mathcal D_{\beta}^{i+1})=|N(\beta)|^{i+1}=|N(\beta^{i+1})|=\#\big(\mathcal{O}_{K}/\beta^{i+1}\mathcal{O}_K\big)$,
		so it suffice to prove that the map is injective.
		
		Suppose $(a_0,\dots,a_i)$ and $(b_0,\dots,b_i)$ have the same image under the map, that is, 
		$\sum_{j=0}^{i}a_j\beta^j+\beta^{i+1}\mathcal{O}_K=\sum_{j=0}^{i}b_j\beta^j+\beta^{i+1}\mathcal{O}_K$.
		Then
		\[\sum_{j=0}^{i-1}(a_j-b_j)\beta^j+(a_i-b_i)\beta^i\in\beta^{i+1}\mathcal{O}_K,\]
		so $\sum_{j=0}^{i-1}(a_j-b_j)\beta^j\in\beta^i \mathcal{O}_K$.
		By the induction hypothesis, we have $(a_0,\dots,a_{i-1})=(b_0,\dots,b_{i-1})$. Therefore, we obtain $a_i\beta^i-b_i\beta^i\in\beta^{i+1}\mathcal{O}_K$, hence $a_i-b_i\in\beta\mathcal{O}_K$. Since $a_i,b_i\in\mathcal D_{\beta}$, their difference can not lie in $\beta\mathcal{O}_K$ unless they are the same. 
	\end{proof}
	
	For each $i\ge1$, we have two natural maps $\mathcal D_{\beta}^{i+1}\rightarrow\mathcal D_{\beta}^i$ and $\mathcal{O}_{K}/\beta^{i+1}\mathcal{O}_K\rightarrow\mathcal{O}_{K}/\beta^i\mathcal{O}_K$, and it is easy to check that they commute with the map in Lemma \ref{2402171219}. Taking inverse limits, we have a bijection 
	\begin{align*}
		\mathcal D_{\beta}^{\N}&\longleftrightarrow\mathcal{O}_{K,\beta}:=\lim_{\leftarrow}\mathcal{O}_{K}/\beta^i\mathcal{O}_K  \\
		(a_i)_{i\in\N}&\longleftrightarrow a_0+a_1\beta+\cdots+a_{i}\beta^{i}+\cdots
	\end{align*}
	
	Now $\mathcal{O}_{K}$ can be viewed as a subring of $\mathcal{O}_{K,\beta}$ via natural embedding, so for every $\alpha\in\mathcal{O}_{K}$, there is a sequence $(a_i)_{i\in\N}\in\mathcal D_{\beta}^{\N}$ such that
		\[\alpha-(a_0+a_1\beta+\cdots+a_{i}\beta^{i})\in\beta^{i+1}\mathcal{O}_K\]
		holds for each $i\in\N$, and we have 	
	\begin{equation}
				\alpha=\lim_{i\rightarrow\infty}a_0+\cdots+a_i\beta^i
	\end{equation}
	with respect to $\mathfrak{p}$-adic topology for any prime ideal $\mathfrak{p}$ in $\mathcal{O}_{K}$ dividing $\beta$. If there exists another sequence $(b_i)_{i\in\N}\in\mathcal D_{\beta}^{\N}$ such that
	\[\alpha=\lim_{i\rightarrow\infty}b_0+b_1\beta+\cdots+b_i\beta^i\]
	with respect to $\mathfrak{p}$-adic topology for some prime ideal $\mathfrak{p}$ in $\mathcal{O}_{K}$ dividing $\beta$, then bijectivity implies $(a_i)_{i\in\N}=(b_i)_{i\in\N}$. Therefore we conclude that $\beta$-adic expansion is well-defined.
	
	Next we investigate the relation between $\beta$-adic expansion and the radix expansion of base $\beta$. When $\mathcal{D}_\beta=\{0,1,\dots,|N(\beta)|-1\}$ and $(\beta,\,\mathcal D_{\beta})$ is a CNS, for any $\alpha \in \mathcal{O}_{K}$, we have $\alpha = a_{0}+a_{1}\beta+\cdots+a_{m} \beta^{m}$ for some $a_{0}, \ldots, a_{m} \in \mathcal{D}$. Therefore if $\mathcal{D}_\beta$ is a set of representatives, then the $\beta$-adic expansion of $\alpha$ with respect to is $(a_{0}, \ldots, a_{m},0,0,0,\ldots)$.

	\begin{lem}\label{2402161455}
		Let $(\beta)=\mathfrak p_1^{e_1}\cdots\mathfrak p_{h}^{e_h}$. If $(\beta,\,\{0,1,\dots,|N(\beta)|-1\})$ is a CNS in $\mathcal{O}_{K}$, then $|N(\mathfrak{p}_i)|=q_i$ for all $i$, where $q_i$ is the integer prime that lies below $\mathfrak{p}_i$. 
	\end{lem}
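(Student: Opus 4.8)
The plan is to invoke the canonical number system hypothesis only in the following weak form: the digit set $\mathcal{D}_\beta=\{0,1,\dots,|N(\beta)|-1\}$ meets every residue class of $\mathcal{O}_K/\mathfrak{p}_i$, for each $i$. To see this, fix $i$ and take an arbitrary $\alpha\in\mathcal{O}_K$. Writing its radix expansion $\alpha=a_0+a_1\beta+\cdots+a_m\beta^m$ with $a_j\in\mathcal{D}_\beta$ and reducing modulo $\mathfrak{p}_i$ — which divides $(\beta)$, so $\beta\in\mathfrak{p}_i$ — gives $\alpha\equiv a_0\pmod{\mathfrak{p}_i}$. Hence the composite $\mathcal{D}_\beta\hookrightarrow\mathcal{O}_K\twoheadrightarrow\mathcal{O}_K/\mathfrak{p}_i$ is surjective.

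Next I would use that the elements of $\mathcal{D}_\beta$ are rational integers, so their images in the residue field $\mathcal{O}_K/\mathfrak{p}_i$ all lie in the image of the structure homomorphism $\Z\to\mathcal{O}_K/\mathfrak{p}_i$. Since $\mathfrak{p}_i\cap\Z=q_i\Z$, this image is the prime subfield $\Z/q_i\Z$, which has exactly $q_i$ elements. Combined with the surjectivity just established, this forces $\mathcal{O}_K/\mathfrak{p}_i=\Z/q_i\Z$, and therefore $N(\mathfrak{p}_i)=\#(\mathcal{O}_K/\mathfrak{p}_i)=q_i$, which is the claim. (Equivalently: one always has $N(\mathfrak{p}_i)=q_i^{f_i}$ for the residue degree $f_i\ge1$, so $N(\mathfrak{p}_i)\ge q_i$, while surjectivity gives $N(\mathfrak{p}_i)\le q_i$, whence $f_i=1$.)

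I do not anticipate a genuine obstacle. The one point to handle carefully is the logical role of the CNS hypothesis: it is used solely to guarantee that every class modulo $\mathfrak{p}_i$ is hit by some element of $\{0,1,\dots,|N(\beta)|-1\}$ — in particular neither the uniqueness of radix expansions nor their finiteness is needed beyond existence — together with the standard fact that $\Z$ surjects onto the prime field of $\mathcal{O}_K/\mathfrak{p}_i$. If desired, one can alternatively first note that $\mathcal{D}_\beta$ is a full set of representatives of $\mathcal{O}_K/\beta\mathcal{O}_K$ (it is surjective by the above, and $\#\mathcal{D}_\beta=|N(\beta)|=\#(\mathcal{O}_K/\beta\mathcal{O}_K)$) and then project through $\mathcal{O}_K/\beta\mathcal{O}_K\twoheadrightarrow\mathcal{O}_K/\mathfrak{p}_i$, but this detour is not necessary.
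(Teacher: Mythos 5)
Your proof is correct, and it takes a genuinely different route from the paper's. The paper argues by contradiction and globally modulo $\beta$: assuming $N(\mathfrak{p}_i)>q_i$ for some $i$, it observes that $\prod_j q_j^{e_j}$ is then strictly less than $|N(\beta)|$, hence is a nonzero element of the digit set that is nevertheless divisible by $\beta$; this breaks injectivity of the map $\{0,1,\dots,|N(\beta)|-1\}\to\mathcal{O}_K/\beta\mathcal{O}_K$, hence (by the cardinality count $\#\mathcal D_\beta=|N(\beta)|=\#(\mathcal{O}_K/\beta\mathcal{O}_K)$) also surjectivity, contradicting the fact that every $\alpha$ is congruent to its constant digit $c_0$ modulo $\beta$. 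You instead work locally at each $\mathfrak{p}_i$ and directly: the radix expansion shows the digit set surjects onto the residue field $\mathcal{O}_K/\mathfrak{p}_i$, while rational integer digits can only hit the prime subfield $\Z/q_i\Z$, forcing the residue degree to be $1$. Both arguments use only the existence (not uniqueness) of radix expansions. Yours is more direct — no contradiction, no counting of $\mathcal{O}_K/\beta\mathcal{O}_K$, and it isolates the real mechanism (digits drawn from $\Z$ see only the prime field); as a bonus it proves inertia degree $1$ prime-by-prime without needing the factorization of $|N(\beta)|$. The paper's version has the minor advantage of exhibiting an explicit digit ($\prod_j q_j^{e_j}$) that witnesses the failure, which ties in with its subsequent Lemma \ref{lem:CNS} where the same modulo-$\beta$ counting reappears.
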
	
	\begin{proof}
		Assume $|N(\mathfrak{p}_i)|>q_i$ for some i. Then 
		\[|N(\beta)|=\prod_{1\le i\le h}\Big|N(\mathfrak{p}_i)\Big|^{e_i}>\prod_{1\le i\le h}q_i^{e_i},\]
		hence $\prod_{1\le i\le h} q_i^{e_i}\in\mathcal \{0,1,\dots,|N(\beta)|-1\}$. Since $\mathfrak{p}_i\mid q_i$ for all $i$, we have $\beta\mid\prod_{1\le i\le h} q_i^{e_i}$, thus the map 
		\begin{align*}
			\{0,1,\dots,|N(\beta)|-1\}&\rightarrow\mathcal{O}_{K}/\beta\mathcal{O}_K \\
			x&\mapsto x+\beta\mathcal{O}_K
		\end{align*}
		is not injective. Note that $\#\mathcal D_{\beta}=|N(\beta)|=\#(\mathcal{O}_{K}/\beta\mathcal{O}_K)$, so the above map is also not surjective. Therefore we can choose an element $\alpha\in\mathcal{O}_K$ such that $\alpha\not\equiv x\;(\mo1 \beta)$ for any $x\in\mathcal D_{\beta} $. However, since $(\beta,\,\{0,1,\dots,|N(\beta)|-1\})$ is a CNS in $\mathcal{O}_{K}$, we have $\alpha=c_0+c_1\beta+\cdots+c_m\beta^m$ for some $c_0,\dots,c_m\in\mathcal D_{\beta}$, which implies $\alpha\equiv c_0\;(\mo1 \beta)$, a contradiction.
	\end{proof}	
	
	\begin{lem} \label{lem:CNS}
	Let $(\beta)=\mathfrak p_1^{e_1}\cdots\mathfrak p_{h}^{e_h}$ and $\mathcal{D}_\beta=\{0,1,\dots,|N(\beta)|-1\}$. If $(\beta,\,\mathcal D_{\beta})$ is a CNS in $\mathcal O_K$, and $\mathfrak{p}_j$ is unramified for all $j$, then $\mathcal{D}_\beta$ is a set of representatives of $\mathcal{O}_{K}/\beta\mathcal{O}_K$. 
	\end{lem}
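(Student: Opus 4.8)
The plan is to prove that the reduction map $\pi\colon \mathcal D_\beta\to\mathcal O_K/\beta\mathcal O_K$, $x\mapsto x+\beta\mathcal O_K$, is a bijection, which is exactly the assertion that $\mathcal D_\beta=\{0,1,\dots,|N(\beta)|-1\}$ is a set of representatives. Since $\mathcal D_\beta$ consists of $|N(\beta)|$ distinct rational integers and $\#(\mathcal O_K/\beta\mathcal O_K)=N(\beta\mathcal O_K)=|N(\beta)|$ by the standard fact on norms of principal ideals recalled just before Lemma \ref{2402171219}, the two sets are finite of the same cardinality, so it suffices to prove that $\pi$ is surjective (equivalently, injective).

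For surjectivity I would argue straight from the CNS hypothesis: given $\alpha\in\mathcal O_K$, the CNS property provides $a_0,\dots,a_m\in\mathcal D_\beta$ with $\alpha=a_0+a_1\beta+\cdots+a_m\beta^m$, and then $\alpha-a_0=\beta(a_1+a_2\beta+\cdots+a_m\beta^{m-1})\in\beta\mathcal O_K$, so $\pi(a_0)=\alpha+\beta\mathcal O_K$. Thus every residue class is in the image of $\pi$; being a surjection between finite sets of equal cardinality, $\pi$ is a bijection, so $\mathcal D_\beta$ is a set of representatives of $\mathcal O_K/\beta\mathcal O_K$. This settles the lemma, and in fact this route uses neither the unramifiedness hypothesis nor Lemma \ref{2402161455}; its only non-trivial input is $\#(\mathcal O_K/\beta\mathcal O_K)=|N(\beta)|$, which the text has already quoted.

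An alternative, which is the route where the unramifiedness hypothesis together with Lemma \ref{2402161455} would naturally enter, is to verify injectivity of $\pi$ directly, i.e.\ to show $\beta\nmid m$ in $\mathcal O_K$ for every $m\in\{1,\dots,|N(\beta)|-1\}$. Assuming $\beta\mid m$ gives $v_{\mathfrak p_i}(m)\ge e_i$ for each $i$; since $\mathfrak p_i$ is unramified over the rational prime $q_i$ below it, one has $v_{\mathfrak p_i}(m)=v_{q_i}(m)$ for the rational integer $m$, hence $q_i^{e_i}\mid m$ in $\Z$, while Lemma \ref{2402161455} gives $N(\mathfrak p_i)=q_i$ and therefore $|N(\beta)|=\prod_i q_i^{e_i}$. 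The delicate point on this path is that the primes $q_i$ need not be distinct, so one only gets $\mathrm{lcm}_i q_i^{e_i}\mid m$ and must separately argue that distinct $\mathfrak p_i$ lie over distinct rational primes under the CNS hypothesis; it is precisely to avoid this complication that I would present the surjectivity argument above instead.
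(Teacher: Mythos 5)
Your main argument is correct, and it takes the opposite route from the paper. The paper also starts from $\#(\mathcal O_K/\beta\mathcal O_K)=|N(\beta)|=\#\mathcal D_\beta$, but then proves \emph{injectivity}: assuming $\beta\mid x-y$ for distinct digits $x,y$, it passes from $\mathfrak p_1^{e_1}\cdots\mathfrak p_h^{e_h}\mid x-y$ to $q_1^{e_1}\cdots q_h^{e_h}\mid x-y$ via unramifiedness, and concludes $|N(\beta)|\mid x-y$ by Lemma \ref{2402161455}, a contradiction since $0<|x-y|<|N(\beta)|$. You instead prove \emph{surjectivity} directly from the CNS hypothesis ($\alpha\equiv a_0\pmod{\beta}$ for the lowest digit $a_0$), which is the same device the authors themselves deploy inside the proof of Lemma \ref{2402161455}; your version is shorter, needs neither that lemma nor the unramifiedness hypothesis, and so in particular shows the hypothesis ``$\mathfrak p_j$ unramified'' is superfluous for this statement. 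Your caveat about the second route is also well taken: the deduction ``$\prod_j\mathfrak p_j^{e_j}\mid x-y$ and each $\mathfrak p_j$ unramified imply $\prod_j q_j^{e_j}\mid x-y$'' is only valid when the $q_j$ are pairwise distinct (for a rational integer $m$ one gets $v_{q_j}(m)\ge e_j$ for each $j$, hence only $\operatorname{lcm}_j q_j^{e_j}\mid m$), and the paper does not address this. The gap is fillable --- surjectivity of $\Z\to\mathcal O_K/\beta\mathcal O_K$ forces $\mathcal O_K/\beta\mathcal O_K$ to be cyclic, so by CRT the orders $q_j^{e_j}$ of the factors $\mathcal O_K/\mathfrak p_j^{e_j}$ must be pairwise coprime --- but that repair already presupposes your surjectivity argument, which is another reason to prefer the route you chose.
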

	\begin{proof}
	Note that $\#(\mathcal{O}_{K}/\beta\mathcal{O}_K)=N(\beta\mathcal{O}_K)=|N(\beta)|=\#\mathcal{D}_\beta$, so we only need to show $x-y\notin\beta\mathcal{O}_K$ for all distinct $x,y\in\mathcal{D}_\beta$. Assume that $\beta\mid x-y$ for some distinct $x,y\in\mathcal{D}_\beta$, then $\mathfrak p_1^{e_1}\cdots\mathfrak p_{h}^{e_h}\mid x-y$. Since $\mathfrak{p}_j$ is unramified for all $j$, this implies $q_1^{e_1}\cdots q_{h}^{e_h}\mid x-y$. Hence, combined with Lemma \ref{2402161455},   \[|N(\beta)|=N(\mathfrak{p}_1^{e_1}\cdots\mathfrak{p}_h^{e_h})=N(\mathfrak{p}_1)^{e_1}\cdots N(\mathfrak{p}_h)^{e_h}=q_1^{e_1}\cdots q_h^{e_h}\mid x-y,\]
	a contradiction. 		
	\end{proof}
			
	\begin{cor}[=Theorem \ref{thm:2402262104}] \label{2402262104}
		Take $\mathcal{D}_\beta=\{0,1,\dots,|N(\beta)|-1\}$. If $(\beta,\,\mathcal D_{\beta})$ is a CNS, $\beta$ is not divided by ramified primes and $\alpha$ is relatively prime to $\beta$, then \eqref{2404121902} holds for any digit $b\in\{1,\dots,|N(\beta)|-1\}$.
	\end{cor}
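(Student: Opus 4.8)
The plan is to reduce this statement to Theorem \ref{2402171658}, since under a CNS the radix expansion in base $\beta$ and the $\beta$-adic expansion with respect to the digit set $\mathcal D_\beta=\{0,1,\dots,|N(\beta)|-1\}$ record the same non-zero digits.

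First I would check that the hypotheses of Theorem \ref{2402171658} are met. Write the prime decomposition $(\beta)=\mathfrak p_1^{e_1}\cdots\mathfrak p_{h}^{e_h}$. By assumption none of the $\mathfrak p_j$ is ramified, and Lemma \ref{2402161455}, which uses only that $(\beta,\mathcal D_\beta)$ is a CNS, yields $N(\mathfrak p_j)=q_j$ for every $j$, where $q_j$ is the rational prime lying below $\mathfrak p_j$; equivalently, each $\mathfrak p_j$ has inertia degree $1$. Thus $\beta$ satisfies precisely the hypotheses imposed in Theorem \ref{2402171658}, and $\alpha$ remains relatively prime to $\beta$.

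Second, I would identify the two notions of $\mathcal M_b$. By Lemma \ref{lem:CNS}, $\mathcal D_\beta=\{0,1,\dots,|N(\beta)|-1\}$ is a set of representatives of $\mathcal{O}_K/\beta\mathcal{O}_K$, so the $\beta$-adic expansion with respect to $\mathcal D_\beta$ is well-defined. For $\gamma\in\mathcal{O}_K$ with radix expansion $\gamma=c_0+c_1\beta+\cdots+c_m\beta^m$, $c_j\in\mathcal D_\beta$, one has $\gamma-(c_0+\cdots+c_i\beta^i)\in\beta^{i+1}\mathcal{O}_K$ for all $i$, so the uniqueness of $\beta$-adic expansions established in Section \ref{sec:beta-adic} forces the $\beta$-adic expansion of $\gamma$ to be $(c_0,\dots,c_m,0,0,\dots)$. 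Hence, for a fixed digit $b\in\{1,\dots,|N(\beta)|-1\}$ --- and here it is essential that $b\ne 0$, so that the trailing zeros of the $\beta$-adic expansion do not interfere --- the digit $b$ occurs among $c_0,\dots,c_m$ if and only if it occurs in the $\beta$-adic expansion of $\gamma$. Taking $\gamma=\alpha^n$ for $n=1,\dots,N$ shows that the counting function $\mathcal M_b(\alpha,\beta,N)$ defined by \eqref{2402141036} coincides with the one defined by \eqref{2402221953}.

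Combining these observations, the bound \eqref{2404121902} for the radix expansion is immediate from Theorem \ref{2402171658}. There is no genuine analytic difficulty at this stage; the only point needing attention is the restriction to $b\ne 0$, which is forced because the $\beta$-adic expansion of every element ends in infinitely many zeros whereas its radix expansion is finite, so $\mathcal M_0$ in the two senses would in general disagree. All of the substantive work is carried by Theorem \ref{2402171658}.
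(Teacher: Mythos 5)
Your proposal is correct and follows exactly the paper's route: it invokes Lemma \ref{2402161455} to verify the norm hypothesis, Lemma \ref{lem:CNS} to make $\mathcal D_\beta$ a set of representatives, identifies the radix expansion with the zero-padded $\beta$-adic expansion, and then applies Theorem \ref{2402171658}. Your treatment is merely more explicit than the paper's about why $b\neq 0$ is needed, which is a welcome clarification but not a different argument.
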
	
	\begin{proof}
	Note that if the radix expansion of $\alpha^n$ in base $\beta$ is 		$$\alpha^n=a_0+a_1\beta+\cdots+a_m \beta^m,\quad a_j\in D_{\beta} \;(j=0,1,\dots,m),$$
	then we may add infinitely many zeroes to obtain its $\beta$-adic expansion $$(a_{0}, a_{1}, \ldots, a_{m}, 0,0, \ldots).$$ Therefore this corollary follows from Theorem \ref{2402171658}, Lemma \ref{2402161455} and Lemma \ref{lem:CNS}.
	\end{proof}
	
	\begin{rem}
When $b=0$, if the length of the radix expansion of $\alpha^n$ is long enough, the we may use a similar inequality as \eqref{2402261744} and follow the proof of Theorem \ref{2402171658} to deduce the desired bound. 
	\end{rem}
		
	\section{$\mathfrak{p}$-adic interpolation of the sequence $(\alpha^n)_{n\in\N}$}
	Let $(\beta)=\mathfrak p_1^{e_1}\cdots\mathfrak p_{h}^{e_h}$ be the prime ideal decomposition of $(\beta)$ and $\alpha\in\mathcal O_K$ be relatively prime to $\beta$. Fix a prime ideal $\mathfrak{p}\in\{\mathfrak{p}_1,\dots,\mathfrak{p}_h\}$. In order to analyze the $\beta$-adic expansion of $\alpha^n$, we need to introduce a powerful method called $\mathfrak{p}$-adic interpolation.

	 Recall that $(K,|\cdot|_{\mathfrak{p}})$ is a valued field and the distance of $x,y\in K$ is defined as $|x-y|_{\mathfrak{p}}$. A valued field is said to be \emph{complete} when every Cauchy sequence has a limit.
	
	\begin{prop}[{\cite[Chapter 1, (M)]{Ribenboim1999}}]
		Every valued field has a completion.
	\end{prop}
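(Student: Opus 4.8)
The final statement to prove is the classical fact that every valued field has a completion. The plan is to carry out the standard Cantor-style construction of the completion as equivalence classes of Cauchy sequences, which works uniformly for any field equipped with a nonarchimedean (or archimedean) absolute value.

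First I would fix the valued field $(K,|\cdot|_{\mathfrak p})$ and let $\mathcal C$ denote the set of all Cauchy sequences $(x_n)_{n\in\N}$ with entries in $K$. I would check that $\mathcal C$ is a commutative ring under termwise addition and multiplication: the only nontrivial point is that the product of two Cauchy sequences is Cauchy, which follows from boundedness of Cauchy sequences together with the estimate $|x_ny_n-x_my_m|_{\mathfrak p}\le\max\{|x_n|_{\mathfrak p}|y_n-y_m|_{\mathfrak p},\,|y_m|_{\mathfrak p}|x_n-x_m|_{\mathfrak p}\}$ (using the ultrametric inequality, or the ordinary triangle inequality in the archimedean case). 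Next I would let $\mathfrak m\subseteq\mathcal C$ be the set of sequences converging to $0$, verify that $\mathfrak m$ is an ideal, and then show $\mathfrak m$ is maximal: if $(x_n)\notin\mathfrak m$, then $|x_n|_{\mathfrak p}$ is eventually bounded below by some $\varepsilon>0$, so modifying finitely many terms to $1$ produces a Cauchy sequence differing from $(x_n)$ by an element of $\mathfrak m$ and whose termwise inverse is again Cauchy; hence $(x_n)$ is invertible modulo $\mathfrak m$. Therefore $\widehat K:=\mathcal C/\mathfrak m$ is a field.

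Then I would extend the absolute value: for a Cauchy sequence $(x_n)$ the real sequence $|x_n|_{\mathfrak p}$ is Cauchy in $\R$, hence converges, and I would define $\widehat{|\cdot|}$ on $\widehat K$ by sending the class of $(x_n)$ to $\lim_n|x_n|_{\mathfrak p}$. This is well-defined (two equivalent Cauchy sequences have the same limit of absolute values) and one checks it is multiplicative, vanishes only on the zero class, and satisfies the (ultra)triangle inequality by passing to the limit. The natural map $K\hookrightarrow\widehat K$ sending $x$ to the class of the constant sequence is an isometric field embedding, and its image is dense because a Cauchy sequence $(x_n)$ is, by construction, the limit of the constant sequences $x_n$. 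Finally I would verify completeness of $\widehat K$: given a Cauchy sequence in $\widehat K$, use density of $K$ to approximate each term within $2^{-k}$ by an element of $K$, show the resulting sequence in $K$ is Cauchy, and check that its class in $\widehat K$ is the desired limit — the usual diagonal argument.

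The construction itself is routine; the only place requiring genuine care is the proof that $\mathfrak m$ is a maximal ideal, i.e. that every Cauchy sequence bounded away from $0$ becomes invertible in the quotient, since this is where one uses that $K$ is a field rather than merely a ring and where the nonarchimedean estimate on $|x_n^{-1}-x_m^{-1}|_{\mathfrak p}$ must be controlled using the uniform lower bound on $|x_n|_{\mathfrak p}$. I expect that to be the main (mild) obstacle; everything else is a direct transcription of the real-analysis completion argument into the valued-field setting, and none of it uses any special feature of number fields, so I would simply cite \cite[Chapter 1, (M)]{Ribenboim1999} for the details and record here only the outline above.
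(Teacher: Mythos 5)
Your outline is correct and is exactly the standard Cantor construction (Cauchy sequences modulo null sequences) that the cited reference \cite[Chapter 1, (M)]{Ribenboim1999} carries out; the paper itself offers no proof and simply cites that source, so there is nothing to compare beyond noting that your argument matches the classical one. The one step you rightly flag as needing care --- maximality of the ideal of null sequences via the uniform lower bound on $|x_n|_{\mathfrak p}$ --- is handled correctly in your sketch.
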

	
	We denote the completion of $K$ with respect to the $\mathfrak{p}$-adic absolute value $|\cdot|_\mathfrak{p}$ by $K_\mathfrak{p}$, and denote the extended absolute value again by $|\cdot|_\mathfrak{p}$. Let 
	\[\bar{B}(0,1)=\{x\in K_{\mathfrak{p}}\colon |x|_\mathfrak{p}\le1\}\]
	denote the closed unit ball of $K_{\mathfrak{p}}$. It is clear that $\mathcal{O}_K\subset \bar{B}(0,1)$. 
	
	Let $(\alpha_n)_{n\in\N}$ be a sequence of integers in $\mathcal{O}_K$. A \emph{$\mathfrak{p}$-adic interpolation} of the sequence $(\alpha_n)_{n\in\N}$ is a continuous function $G(x)$, defined in the unit ball $\bar{B}(0,1)$, with $G(n)=\alpha_n$ for all $n\in\N$. 
	
	\begin{lem}\label{2402211316}
		If $\alpha\in\mathcal{O}_K$ satisfies that $\mathfrak{p}\nmid\alpha$, then there is a rational integer $u_{\mathfrak{p}}$ such that the sequence $(\alpha^n)_{n\in\N}$ can be divided into subsequences \[(\alpha^{l}(\alpha^{u_{\mathfrak{p}}})^n)_{n\in\N},\quad l=0,1,\dots,u_{\mathfrak{p}}-1,\] 
		and for each $l$, the sequence $(\alpha^{l}(\alpha^{u_{\mathfrak{p}}})^n)_{n\in\N}$ has an analytic $\mathfrak{p}$-adic interpolation $G_l$. 
	\end{lem}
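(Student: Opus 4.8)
The plan is to pass to the completion $K_{\mathfrak p}$ and build each $G_l$ by $\mathfrak p$-adic exponentiation of a suitable power of $\alpha$. Write $q$ for the rational prime below $\mathfrak p$, normalise $v_{\mathfrak p}$ so that a uniformiser of $K_{\mathfrak p}$ has valuation $1$, put $e:=v_{\mathfrak p}(q)$, and let $\mathfrak m$ be the maximal ideal of the valuation ring of $K_{\mathfrak p}$. Since $\mathfrak p\nmid\alpha$, the image of $\alpha$ in the finite residue field is a unit, hence has finite multiplicative order $d$, so $\alpha^{d}\in 1+\mathfrak m$. Because $1+\mathfrak m$ is a pro-$q$ group, $(\alpha^{d})^{q^{k}}\to 1$ as $k\to\infty$; I would choose $k$ large enough that, setting $u_{\mathfrak p}:=d\,q^{k}$ and $\gamma:=\alpha^{u_{\mathfrak p}}-1$, one has $v_{\mathfrak p}(\gamma)>\tfrac{e}{q-1}$ --- which is exactly the condition guaranteeing convergence of the $\mathfrak p$-adic exponential $\exp(y)=\sum_{k\ge 0}y^{k}/k!$ on the corresponding ball. (In the setting of Theorem~\ref{2402171658}, where $\mathfrak p$ is unramified with $N(\mathfrak p)=q$ so that $K_{\mathfrak p}=\Q_q$ and $e=1$, one may simply take $u_{\mathfrak p}=q-1$ for odd $q$ and $u_{\mathfrak p}=2$ for $q=2$.)

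Next, set $\eta:=\log(\alpha^{u_{\mathfrak p}})=\sum_{j\ge 1}(-1)^{j-1}\gamma^{j}/j$, the $\mathfrak p$-adic logarithm, which converges since $v_{\mathfrak p}(\gamma)>0$; a short valuation estimate (using that $q^{t}\le j$ whenever $q^{t}\mid j$) shows the $j=1$ term dominates, so $v_{\mathfrak p}(\eta)=v_{\mathfrak p}(\gamma)>\tfrac{e}{q-1}$. For each $l\in\{0,1,\dots,u_{\mathfrak p}-1\}$ I then define
\[
  G_l(x):=\alpha^{l}\exp(x\eta)=\alpha^{l}\sum_{k\ge 0}\frac{\eta^{k}}{k!}\,x^{k}.
\]
Since $v_{\mathfrak p}(k!)=e\,v_q(k!)\le\tfrac{ek}{q-1}$, the $k$-th coefficient has valuation at least $k\bigl(v_{\mathfrak p}(\eta)-\tfrac{e}{q-1}\bigr)\to\infty$; hence the power series converges on the whole closed ball $\bar{B}(0,1)$, and $G_l$ is analytic there, in particular continuous. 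Finally, for $n\in\N$ the standard facts that $\exp$ and $\log$ are mutually inverse on their common region of convergence and that $\exp$ turns addition into multiplication give $G_l(n)=\alpha^{l}\exp(\eta)^{n}=\alpha^{l}\bigl(\alpha^{u_{\mathfrak p}}\bigr)^{n}$, so $G_l$ interpolates $\bigl(\alpha^{l}(\alpha^{u_{\mathfrak p}})^{n}\bigr)_{n\in\N}$. Since every $n\in\N$ is uniquely of the form $n=mu_{\mathfrak p}+l$ with $0\le l<u_{\mathfrak p}$, these subsequences partition $(\alpha^{n})_{n\in\N}$, which is what was claimed.

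I expect the only genuinely load-bearing step --- and the sole place where the hypothesis $\mathfrak p\nmid\alpha$ is used --- to be the first one: extracting an exponent $u_{\mathfrak p}$ for which $\alpha^{u_{\mathfrak p}}$ is close enough to $1$ that $\exp$ applied to $x\eta$ converges for every $x\in\bar{B}(0,1)$. The remainder is routine: the convergence ranges and the identities $\exp(\log y)=y$ and $\exp(y+z)=\exp(y)\exp(z)$ for the $\mathfrak p$-adic $\exp$ and $\log$ on a local field are standard (and in the unramified case relevant to Theorem~\ref{2402171658} this is just the classical interpolation of $n\mapsto a^{n}$ over $\Q_q$), and the rest is bookkeeping with valuations of factorials.
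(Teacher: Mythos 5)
Your proof is correct and follows essentially the same route as the paper: choose $u_{\mathfrak p}$ so that $\alpha^{u_{\mathfrak p}}$ is close enough to $1$, then interpolate via $G_l(x)=\alpha^l\exp_{\mathfrak p}\bigl(x\log_{\mathfrak p}(\alpha^{u_{\mathfrak p}})\bigr)$, with convergence on $\bar B(0,1)$ coming from $v_q(n!)\le n/(q-1)$. The only (immaterial) difference is how $u_{\mathfrak p}$ is produced --- you use the order of $\alpha$ in the residue field together with the pro-$q$ structure of the principal units, while the paper simply applies the pigeonhole principle in the finite ring $\mathcal O_K/\mathfrak p^{v_{\mathfrak p}(q)+1}$ to get $|\alpha^{u_{\mathfrak p}}-1|_{\mathfrak p}\le c^{v_{\mathfrak p}(q)+1}$.
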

	
	\begin{proof}
		Define the formal series
		\[\log(1+X):=\sum_{n=1}^{\infty}(-1)^{n+1}\frac{X^n}{n}\]
		Recall that for a power series $f(X)=\sum_{n=0}^{\infty}a_nX^n$ with coefficients in $K_{\mathfrak{p}}$, the radius of convergence is defined as
		\begin{equation}\label{2402191447}
			r=1/(\limsup_{n\rightarrow+\infty}|a_n|_{\mathfrak{p}}),
		\end{equation}
		then $f(x)$ converges for every $x\in K_{\mathfrak{p}}$ with $|x|_{\mathfrak{p}}<r$, see \cite[Proposition 5.4.1]{Gouvea[2020]copyright2020} for details. 
		
		Let $a_n=(-1)^{n+1}/n$, we claim that  $|a_n|_{\mathfrak{p}}^{1/n}=c^{-v_{\mathfrak{p}}(n)/n}\rightarrow1$ as $n\rightarrow\infty$, where $c$ is the constant fixed in Definition \ref{c}. To see this, let $q$ be the unique integer prime lying below $\mathfrak{p}$ and $n=q^{v_q(n)}a$ with $q\nmid a$, then $v_q(n)\le\log n$; on the other hand,
		\begin{equation}\label{2402191504} (n)=(q)^{v_q(n)}(a)=(\mathfrak{p}^{v_{\mathfrak{p}}(q)}\mathfrak{b})^{v_q(n)}(a),
		\end{equation}
		with $\mathfrak{p}\nmid\mathfrak{b}$, thus
		\begin{equation}\label{2402191505}
			v_{\mathfrak{p}}(n)=v_{\mathfrak{p}}(q)v_q(n)\le v_{\mathfrak{p}}(q)\log n,
		\end{equation}
		which completes the proof of the claim. Hence, applying \eqref{2402191447}, we can define the $\mathfrak{p}$-adic logarithm of $x\in B(1,1):=\{x\in K_\mathfrak{p}\colon |x-1|_{\mathfrak{p}}<1\}$ as
		\[\log_\mathfrak{p}(x)=\log_\mathfrak{p}(1+(x-1))=\sum_{n=1}^{\infty}(-1)^{n+1}\frac{(x-1)^n}{n}.\]
		
		Define the formal series $\bexp(X):=\sum_{n=0}^{\infty}X^n/n!$. To calculate the radius of convergence \eqref{2402191447}, let $a_n=1/(n!)$. Similar to \eqref{2402191504} and \eqref{2402191505}, we have
		\begin{equation}\label{2402211409}
			v_{\mathfrak{p}}(n!)=v_{\mathfrak{p}}(q)v_q(n!)< \frac{v_{\mathfrak{p}}(q)n}{q-1},
		\end{equation}
		where we use $v_q(n!)\le n/(q-1)$ in the last inequality (see \cite[Lemma 5.7.4]{Gouvea[2020]copyright2020}). Hence, \[|1/n!|_{\mathfrak{p}}^{1/n}=c^{-v_{\mathfrak{p}}(n!)/n}<c^{-v_{\mathfrak{p}}(q)/(q-1)}, \]
		this implies that the radius of convergence $r\ge c^{v_{\mathfrak{p}}(q)/(q-1)}$. Therefore, we can define the $\mathfrak{p}$-adic exponential function as
		\begin{equation}\label{2402201226}
			\bexp_{\mathfrak{p}}(x):=\sum_{n=0}^{\infty}\frac{x^n}{n!},\quad x\in B(0,c^{v_{\mathfrak{p}}(q)/(q-1)}) 
		\end{equation}
		
		Observe that for all $x\in B(1,c^{v_{\mathfrak{p}}(q)})$ and $N\in\N$, 
		\begin{align}
			\Big|\sum_{n=1}^{N}(-1)^{n+1}\frac{(x-1)^n}{n}\Big|_{\mathfrak{p}}&\le\max_{1\le n\le N}\frac{|x-1|_{\mathfrak{p}}^n}{|n|_{\mathfrak{p}}}\\
			&=\max_{1\le n\le N}\Big(c^{nv_{\mathfrak{p}}(x-1)}/c^{v_{\mathfrak{p}}(n)}\Big) \notag\\
			&\le\max_{1\le n\le N}\Big(c^{nv_{\mathfrak{p}}(x-1)}/c^{v_{\mathfrak{p}}(q)\log n}\Big)\le|x-1|_{\mathfrak{p}},  \notag
		\end{align}
		where we use \eqref{2402191505} in the third step. Hence, for each $x\in B(1,c^{v_{\mathfrak{p}}(q)})$, we have
		\begin{equation}\label{2402201227}
			|\log_{\mathfrak{p}}(x)|_{\mathfrak{p}}\le|x-1|_{\mathfrak{p}}.
		\end{equation}
		Thus, by \eqref{2402201226} and \eqref{2402201227},
		$\log_{\mathfrak{p}}(x)$ is in the domain of $\bexp_{\mathfrak{p}}$ when $x\in \bar{B}(1,c^{v_{\mathfrak{p}}(q)+1})$.
		
		Note that $\mathcal{O}_K/\mathfrak{p}^{v_{\mathfrak{p}}(q)+1}$ is a finite additive group, the sequence $\alpha,\alpha^2,\dots,\alpha^n,\dots$ 
		must satisfy that there exist two integers $n,m$ with $0\le n<m$ such that  \[\alpha^m+\mathcal{O}_K/\mathfrak{p}^{v_{\mathfrak{p}}(q)+1}=\alpha^n+\mathcal{O}_K/\mathfrak{p}^{v_{\mathfrak{p}}(q)+1},\]
		thus $\alpha^{n}(\alpha^{m-n}-1)\in\mathfrak{p}^{v_{\mathfrak{p}}(q)+1}$. 
		By the condition $\mathfrak{p}\nmid \alpha$, we have $\alpha^{m-n}-1\in\mathfrak{p}^{v_{\mathfrak{p}}(q)+1}$. Therefore, there is an integer $u_{\mathfrak{p}}$ such that $|\alpha^{u_{\mathfrak{p}}}-1|_{\mathfrak{p}}\le c^{v_{\mathfrak{p}}(q)+1}$. 
		
		By \eqref{2402201227}, for $|x|_{\mathfrak{p}}\le1$,
		\begin{equation}\label{2402211402}
			|x\log_{\mathfrak{p}}(\alpha^{u_{\mathfrak{p}}})|_{\mathfrak{p}}=|x|_{\mathfrak{p}}|\log_{\mathfrak{p}}(\alpha^{u_{\mathfrak{p}}})|_{\mathfrak{p}}\le 1\cdot |\alpha^{u_{\mathfrak{p}}}-1|_{\mathfrak{p}}\le c^{v_{\mathfrak{p}}(q)+1}.  		
		\end{equation}
		Combined with \eqref{2402201226}, we obtain that $\bexp_{\mathfrak{p}}(x\log_{\mathfrak{p}}(\alpha^{u_{\mathfrak{p}}}))$ is well-defined on the closed ball $\bar{B}(0,1)$. This expression will serve as the definition of $(\alpha^{u_{\mathfrak{p}}})^{x}$, $x\in\bar{B}(0,1)$. 
		
		Now take $G_l(x)=\alpha^l(\alpha^{u_{\mathfrak{p}}})^{x}$, $x\in\bar{B}(0,1)$, for $l\in\{0,1,\dots,u_{\mathfrak{p}}-1\}$, which is the analytic $\mathfrak{p}$-adic interpolation that we want. 
	\end{proof}
	
	\begin{cor}\label{2402221148}
		Let $(\beta)=\mathfrak p_1^{e_1}\cdots\mathfrak p_{h}^{e_h}$ and $\alpha\in\mathcal O_K$ be relatively prime to $\beta$. Let $u_{\mathfrak p_i}$ ($i=1,2,\dots,h$) be as in Lemma \ref{2402211316} and $u=\prod_{i=1}^{h}u_{\mathfrak p_i}$. Then $G_l(x):=\alpha^l(\alpha^{u})^{x}$ is an analytic $\mathfrak{p}_i$-adic interpolation of $(\alpha^{l}(\alpha^{u})^n)_{n\in\N}$ for all $i=1,2,\dots,h$ and $l=0,1,\dots,u-1$. 
	\end{cor}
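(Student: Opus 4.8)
The plan is to show that the single exponent $u=\prod_{i=1}^{h}u_{\mathfrak p_i}$ inherits, with respect to every $\mathfrak p_i$ simultaneously, the smallness property that $u_{\mathfrak p_i}$ enjoyed with respect to $\mathfrak p_i$ in the proof of Lemma~\ref{2402211316}; once this is established, the construction carried out there applies verbatim, one prime at a time.

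First I would reduce to a single estimate. Fix $i\in\{1,\dots,h\}$ and let $q_i$ be the integer prime lying below $\mathfrak p_i$. By the proof of Lemma~\ref{2402211316} we have $\bigl|\alpha^{u_{\mathfrak p_i}}-1\bigr|_{\mathfrak p_i}\le c^{v_{\mathfrak p_i}(q_i)+1}$, i.e. $\alpha^{u_{\mathfrak p_i}}=1+t_i$ with $v_{\mathfrak p_i}(t_i)\ge v_{\mathfrak p_i}(q_i)+1$. Since $u_{\mathfrak p_i}\mid u$, write $m_i:=u/u_{\mathfrak p_i}\in\N$, so that $\alpha^u=(1+t_i)^{m_i}=1+\sum_{j=1}^{m_i}\binom{m_i}{j}t_i^{\,j}$. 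Every term of the sum is a multiple of $t_i$ in $\mathcal O_K$, so the ultrametric inequality gives $v_{\mathfrak p_i}(\alpha^u-1)\ge v_{\mathfrak p_i}(t_i)\ge v_{\mathfrak p_i}(q_i)+1$, that is,
\[
\bigl|\alpha^u-1\bigr|_{\mathfrak p_i}\le c^{v_{\mathfrak p_i}(q_i)+1}.
\]
(This is nothing but the fact that the set $\{x\in K_{\mathfrak p_i}:|x-1|_{\mathfrak p_i}\le c^{v_{\mathfrak p_i}(q_i)+1}\}$ is closed under multiplication.)

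With this bound the final part of the proof of Lemma~\ref{2402211316} goes through with $\alpha^u$ replacing $\alpha^{u_{\mathfrak p_i}}$: by \eqref{2402201227}, for $|x|_{\mathfrak p_i}\le1$ one has $\bigl|x\log_{\mathfrak p_i}(\alpha^u)\bigr|_{\mathfrak p_i}\le|\alpha^u-1|_{\mathfrak p_i}\le c^{v_{\mathfrak p_i}(q_i)+1}$, which by \eqref{2402201226} lies in the domain of $\bexp_{\mathfrak p_i}$. Hence $(\alpha^u)^x:=\bexp_{\mathfrak p_i}\!\bigl(x\log_{\mathfrak p_i}(\alpha^u)\bigr)$ is defined for all $x\in\bar B(0,1)\subseteq K_{\mathfrak p_i}$, and $G_l(x)=\alpha^l(\alpha^u)^x$ is analytic there, being the composition of the linear map $x\mapsto x\log_{\mathfrak p_i}(\alpha^u)$ with the convergent power series $\bexp_{\mathfrak p_i}$, followed by multiplication by the constant $\alpha^l$; in particular $G_l$ is continuous. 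Finally, the standard functional equations $\log_{\mathfrak p_i}\!\bigl((\alpha^u)^n\bigr)=n\log_{\mathfrak p_i}(\alpha^u)$ and $\bexp_{\mathfrak p_i}\circ\log_{\mathfrak p_i}=\mathrm{id}$ on $\bar B(1,c^{v_{\mathfrak p_i}(q_i)+1})$, to which $(\alpha^u)^n$ belongs by the computation above, give $\bexp_{\mathfrak p_i}\!\bigl(n\log_{\mathfrak p_i}(\alpha^u)\bigr)=(\alpha^u)^n$, hence $G_l(n)=\alpha^l(\alpha^u)^n$ for every $n\in\N$. As $i$ was arbitrary, $G_l$ is an analytic $\mathfrak p_i$-adic interpolation of $(\alpha^l(\alpha^u)^n)_{n\in\N}$ for all $i$ and all $l$.

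The only step carrying any content is the stability of the ball $\bar B(1,c^{v_{\mathfrak p_i}(q_i)+1})$ under $m_i$-th powers, together with the elementary observation that $u=\prod_i u_{\mathfrak p_i}$ is divisible by each $u_{\mathfrak p_i}$, so that one exponent works uniformly over all $h$ primes; everything else is a transcription of Lemma~\ref{2402211316}. (One could equally take $u=\mathrm{lcm}(u_{\mathfrak p_1},\dots,u_{\mathfrak p_h})$.)
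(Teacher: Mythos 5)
Your proposal is correct and follows essentially the same route as the paper: the key point in both is that $u_{\mathfrak p_i}\mid u$ forces $|\alpha^u-1|_{\mathfrak p_i}\le c^{v_{\mathfrak p_i}(q_i)+1}$ (the paper phrases this as a congruence modulo $\mathfrak p_i^{v_{\mathfrak p_i}(q_i)+1}$, you via the binomial expansion, which is the same observation), after which the construction of Lemma~\ref{2402211316} applies unchanged at each prime. Your explicit check that $G_l(n)=\alpha^l(\alpha^u)^n$ via $\bexp_{\mathfrak p_i}\circ\log_{\mathfrak p_i}=\mathrm{id}$ is a small addition the paper leaves implicit.
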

	\begin{proof}
		Let $q_i$ be the unique integer prime lying below $\mathfrak{p}_i$ for $i=1,2,\dots,h$. By the definition of $u_{\mathfrak{p}_i}$, we have
		\[\alpha^{u_{\mathfrak{p}_i}}+\mathcal{O}_K/\mathfrak{p}_i^{v_{\mathfrak{p}_i}(q_i)+1}=1+\mathcal{O}_K/\mathfrak{p}_i^{v_{\mathfrak{p}_i}(q_i)+1}.\]
		Hence, for all $i\in\{1,2,\dots,h\}$,
		$\alpha^{u}+\mathcal{O}_K/\mathfrak{p}_i^{v_{\mathfrak{p}_i}(q_i)+1}=1+\mathcal{O}_K/\mathfrak{p}_i^{v_{\mathfrak{p}_i}(q_i)+1}$,
		that is, 
		\begin{equation}\label{2402221234}
			|\alpha^{u}-1|_{\mathfrak{p}_i}\le c^{v_{\mathfrak{p}_i}(q_i)+1}.
		\end{equation}
		Therefore, $\bexp_{\mathfrak{p}_i}(x\log_{\mathfrak{p}_i}(\alpha^{u}))$ is well-defined for $|x|_{\mathfrak{p}_i}\le1$ (the discussion is similar to the one in the proof of Lemma \ref{2402211316}), and this expression will serve as the definition of $(\alpha^{u})^{x}$,  $|x|_{\mathfrak{p}_i}\le1$.
	\end{proof}
	
	\begin{rem}
		One can think of $G_l(x)=\alpha^l(\alpha^{u})^x$ as a formal function, which is well-defined on the closed ball $\bar{B}(0,1)$ in $K_{\mathfrak{p}}$ for all $\mathfrak{p}\in\{\mathfrak{p}_1,\dots,\mathfrak{p}_h\}$. 
	\end{rem}
	
	\section{Proof of Theorem \ref{2402171658}}
	We begin with a simple lemma. 
	
	\begin{lem}\label{2402221542}
		Let $G_l(x)=\alpha^l(\alpha^{u})^x$ and $\{\mathfrak{p}_1,\dots,\mathfrak{p}_h\}$ be as in Corollary \ref{2402221148}. Then there exist integers $n_0,m_0$ such that
		\begin{equation}\label{2402261329}
			|G_l(x)-G_l(y)|_{\mathfrak{p}}\ge c^{n_0}|x-y|_{\mathfrak{p}},
		\end{equation}
		for all $x,y$ with $|x-y|_{\mathfrak{p}}\le c^{m_0}$ and $\mathfrak{p}\in\{\mathfrak{p}_1,\dots,\mathfrak{p}_h\}$.
	\end{lem}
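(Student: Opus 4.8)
The plan is to establish the sharper identity
\[
|G_l(x)-G_l(y)|_{\mathfrak p}=|\lambda_{\mathfrak p}|_{\mathfrak p}\,|x-y|_{\mathfrak p}\qquad(x,y\in\bar B(0,1)),
\]
for each $\mathfrak p\in\{\mathfrak p_1,\dots,\mathfrak p_h\}$, where $\lambda_{\mathfrak p}:=\log_{\mathfrak p}(\alpha^{u})$. Granting this, the lemma follows by taking $n_0:=\max_{1\le i\le h}v_{\mathfrak p_i}(\lambda_{\mathfrak p_i})$ and $m_0:=0$ (so that $|x-y|_{\mathfrak p}\le c^{m_0}=1$ is automatic on the unit ball); note the right-hand side does not depend on $l$, and $v_{\mathfrak p}(\lambda_{\mathfrak p})$ is a finite nonnegative integer as explained next.

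First I would record the needed properties of $\lambda_{\mathfrak p}$. From $|\log_{\mathfrak p}(x)|_{\mathfrak p}\le|x-1|_{\mathfrak p}$ (see \eqref{2402201227}) and $|\alpha^{u}-1|_{\mathfrak p}\le c^{v_{\mathfrak p}(q)+1}$ (see \eqref{2402221234}) we get $v_{\mathfrak p}(\lambda_{\mathfrak p})\ge v_{\mathfrak p}(q)+1$. Since $(v_{\mathfrak p}(q)+1)(q-1)>v_{\mathfrak p}(q)$ for every rational prime $q\ge2$, we have $v_{\mathfrak p}(q)+1>v_{\mathfrak p}(q)/(q-1)$, so $\lambda_{\mathfrak p}$ — and hence each of $x\lambda_{\mathfrak p}$, $y\lambda_{\mathfrak p}$, $(x-y)\lambda_{\mathfrak p}$ for $x,y\in\bar B(0,1)$ — lies strictly inside the disc of convergence $B(0,c^{v_{\mathfrak p}(q)/(q-1)})$ of $\bexp_{\mathfrak p}$ (see \eqref{2402201226}). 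Also $\lambda_{\mathfrak p}\ne0$: otherwise $G_l(x)=\alpha^{l}\bexp_{\mathfrak p}(0)=\alpha^{l}$ would be constant, contradicting that $G_l$ interpolates $(\alpha^{l+un})_{n\in\N}$ (Corollary \ref{2402221148}), whence $G_l(1)=\alpha^{l+u}\ne\alpha^{l+2u}=G_l(2)$ as $\alpha$ is not a root of unity.

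The main step is a factorization via the functional equation $\bexp_{\mathfrak p}(a+b)=\bexp_{\mathfrak p}(a)\bexp_{\mathfrak p}(b)$, valid on the disc of convergence (see, e.g., \cite{Gouvea[2020]copyright2020}). Writing $w:=(x-y)\lambda_{\mathfrak p}$, this gives
\[
G_l(x)-G_l(y)=\alpha^{l}\,\bexp_{\mathfrak p}(y\lambda_{\mathfrak p})\bigl(\bexp_{\mathfrak p}(w)-1\bigr).
\]
Here $|\alpha^{l}|_{\mathfrak p}=1$ since $\mathfrak p\nmid\alpha$, and $|\bexp_{\mathfrak p}(y\lambda_{\mathfrak p})|_{\mathfrak p}=1$ since for $n\ge1$ one has $v_{\mathfrak p}\bigl((y\lambda_{\mathfrak p})^{n}/n!\bigr)\ge n\,v_{\mathfrak p}(\lambda_{\mathfrak p})-v_{\mathfrak p}(n!)>0$, using $|y|_{\mathfrak p}\le1$, $v_{\mathfrak p}(\lambda_{\mathfrak p})\ge v_{\mathfrak p}(q)+1$, and $v_{\mathfrak p}(n!)\le v_{\mathfrak p}(q)(n-1)/(q-1)$ (a slight sharpening of \eqref{2402211409}), so the constant term $1$ dominates in the ultrametric. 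Finally $\bexp_{\mathfrak p}(w)-1=w+\sum_{n\ge2}w^{n}/n!$, and the same valuation estimate together with $v_{\mathfrak p}(w)\ge v_{\mathfrak p}(q)+1$ yields $v_{\mathfrak p}(w^{n}/n!)>v_{\mathfrak p}(w)$ for all $n\ge2$, i.e. $(n-1)v_{\mathfrak p}(w)>v_{\mathfrak p}(n!)$, which again reduces to $(v_{\mathfrak p}(q)+1)(q-1)>v_{\mathfrak p}(q)$; hence $|\bexp_{\mathfrak p}(w)-1|_{\mathfrak p}=|w|_{\mathfrak p}$. Multiplying the three absolute values gives the claimed identity $|G_l(x)-G_l(y)|_{\mathfrak p}=|w|_{\mathfrak p}=|\lambda_{\mathfrak p}|_{\mathfrak p}\,|x-y|_{\mathfrak p}$.

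I do not anticipate a genuine obstacle: everything reduces to ultrametric estimates on the coefficients $1/n!$, already essentially contained in \eqref{2402211409}, together with the standard functional equation of $\bexp_{\mathfrak p}$ on its disc of convergence. The two points requiring care are that $\lambda_{\mathfrak p}\ne0$ — this is exactly where the hypothesis that $\alpha$ is not a root of unity enters — and that $x\lambda_{\mathfrak p}$, $y\lambda_{\mathfrak p}$, $(x-y)\lambda_{\mathfrak p}$ all lie in the disc on which $\bexp_{\mathfrak p}$ and its functional equation are valid, which is guaranteed by $v_{\mathfrak p}(\lambda_{\mathfrak p})\ge v_{\mathfrak p}(q)+1>v_{\mathfrak p}(q)/(q-1)$.
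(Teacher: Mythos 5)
Your proof is correct, but it takes a genuinely different route from the paper. The paper argues by contradiction and compactness: assuming the Lipschitz lower bound fails along a sequence of pairs $(x_n,y_n)$, it extracts a convergent subsequence, shows the difference quotients tend to $G_l'(x_0)=\alpha^l\log_{\mathfrak p}(\alpha^u)(\alpha^u)^{x_0}$, and derives a contradiction from $G_l'(x_0)\neq 0$. You instead compute directly, using the functional equation of $\bexp_{\mathfrak p}$ to factor $G_l(x)-G_l(y)=\alpha^l\bexp_{\mathfrak p}(y\lambda_{\mathfrak p})\big(\bexp_{\mathfrak p}((x-y)\lambda_{\mathfrak p})-1\big)$ and then reading off the exact identity $|G_l(x)-G_l(y)|_{\mathfrak p}=|\lambda_{\mathfrak p}|_{\mathfrak p}\,|x-y|_{\mathfrak p}$ from standard ultrametric estimates on $v_{\mathfrak p}(n!)$; your valuation inequalities all check out, given $v_{\mathfrak p}(\lambda_{\mathfrak p})\ge v_{\mathfrak p}(q)+1$. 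What your approach buys is effectivity and strength: you get explicit constants $n_0=\max_i v_{\mathfrak p_i}(\log_{\mathfrak p_i}(\alpha^u))$ and $m_0=0$ (so the bound holds on all of $\bar B(0,1)$ with no smallness restriction on $|x-y|_{\mathfrak p}$), which feeds into an effectively computable $C_1$ in the main theorem, whereas the paper's compactness argument produces $n_0,m_0$ non-constructively and requires some care in justifying the limit of difference quotients in the non-archimedean setting. Both proofs ultimately hinge on the same non-degeneracy, $\log_{\mathfrak p}(\alpha^u)\neq 0$, which you correctly trace back to the standing assumption that $\alpha$ is not a root of unity.
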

	\begin{proof}
Fix a $\mathfrak{p}\in\{\mathfrak{p}_1,\dots,\mathfrak{p}_h\}$, we claim that there exist integers $n_{\mathfrak{p}},m_{\mathfrak{p}}>0$ such that for every pair of distinct $x,y\in\bm\bar B(0,1)$,  
\begin{equation}\label{2312211103}
	\text{if $|x-y|_{\mathfrak{p}}\le c^{m_{\mathfrak{p}}}$, then }|G_l(x)-G_l(y)|_{\mathfrak{p}}\ge c^{n_{\mathfrak{p}}}|x-y|_{\mathfrak{p}}.
\end{equation}
Assume that for every $n$, there is a pair of distinct points $x_n$, $y_n$ satisfying
\begin{equation}\label{2312211045}
	|x_n-y_n|_{\mathfrak{p}}\le\frac{1}{n},\quad |G_l(x_n)-G_l(y_n)|_{\mathfrak{p}}<\frac{1}{n}|x_n-y_n|_{\mathfrak{p}}.
\end{equation}
Since $\bm\bar B(0,1)$ is compact (similar to \cite[Corollary 4.2.7]{Gouvea[2020]copyright2020}), $(x_n)_{n\ge1}$ has a convergent subsequence $(x_{n_j})_{j\ge1}$, we assume that $x_{n_j}\rightarrow x_0$. We must have $y_{n_j}\rightarrow x_0$ as well. Suppose that $G_l(z)=\sum_{n=0}^{\infty}c_n z^n$ since $G_l$ is analytic, then 
\begin{align}\label{2312191707}
	&\frac{G_l(x_{n_j})-G_l(y_{n_j})}{x_{n_j}-y_{n_j}}=\frac{ \sum_{n=0}^{\infty}c_n(x_{n_j}^n-y_{n_j}^n)}{x_{n_j}-y_{n_j}}\\
	=&\sum_{n=0}^{\infty}c_n(x_{n_j}^{n-1}+x_{n_j}^{n-2}y_{n_j}+\cdots+y_{n_j}^{n-1})\rightarrow\sum_{n=0}^{\infty}c_nnx_0^{n-1}=G_l'(x_0),\notag
\end{align}
as $j\rightarrow+\infty$. But, by \eqref{2312211045}, 
\[\Big|\frac{G_l(x_n)-G_l(y_n)}{x_n-y_n}\Big|_p<\frac{1}{n},\]
combined with \eqref{2312191707}, we have $G_l'(x_0)=0$. However, this is impossible, one can check that:  
\begin{align}
	G_l'(x)&=\alpha^{l}\Big(\sum_{n=0}^{\infty}\frac{\big(x\log_{\mathfrak{p}}(\alpha^{ u})\big)^{n}}{n!}\Big)' \notag\\
	&=\alpha^{l}\sum_{n=1}^{\infty}\frac{\big(\log_{\mathfrak{p}}(\alpha^{ u})\big)^{n}x^{n-1}}{(n-1)!} \notag\\
	&=\big(\alpha^{l}\log_{\mathfrak{p}}(\alpha^{u})\big)(\alpha^{u})^x\neq0,  
\end{align}
for all $x\in\bm\bar B(0,1)$. This completes the proof of the claim.

Let $m_{0}=\max\big\{m_{\mathfrak{p}}\colon\mathfrak{p}\in\{\mathfrak{p}_1,\dots,\mathfrak{p}_h\}\big\}$ and $n_{0}=\max\big\{n_{\mathfrak{p}}\colon\mathfrak{p}\in\{\mathfrak{p}_1,\dots,\mathfrak{p}_h\}\big\}$. This completes the proof of the lemma.
	\end{proof}

	Fix a digit $b\in\mathcal D_{\beta}$, for a word $(a_j)_{j=0}^{k-1}\in(\mathcal D_{\beta}\setminus\{b\})^k$, denote
	\[
	\big[(a_j)_{j=0}^{k-1}\big]^{(l)}
	:=\big\{0\le n\le |N(\beta)|^k-1\colon \big(\alpha^l(\alpha^u)^n\big)_{\beta,j}=a_j\text{ for } j=0,\dots,k-1\big\},
	\]
	where $u$ is as in Corollary \ref{2402221148}, $l=0,1,\dots,u-1$ and the definition of $(\cdot)_{\beta,j}$ is as in \eqref{2402221857}. By the definition of $\mathcal M_b\big(\alpha,\beta,u|N(\beta)|^k\big)$ (see \eqref{2402221953}), we have 
	\begin{align}\label{2402261744}
		\mathcal M_b\big(\alpha,\beta,u|N(\beta)|^k\big)
		\le&\#\big\{1\le n\le u|N(\beta)|^k\colon (\alpha^{n})_{\beta,j}\neq b \text{ for } j=0,\dots,k-1\big\}\notag\\
		\le&\sum_{l=1}^{u}\sum_{(a_i)_{i=0}^{k-1}\in\big(\mathcal D_{\beta}\setminus\{b\}\big)^k}\#\big[(a_j)_{j=0}^{k-1}\big]^{(l)}. 
	\end{align}
	
	We are now going to estimate $\# \big[(a_j)_{j=0}^{k-1}\big]^{(l)}$. Let $q_j$ be the unique integer prime lying below $\mathfrak{p}_j$ for $j=1,2,\dots,h$. By the condition $N(\mathfrak{p}_j)=q_j$, we have
	\begin{equation}\label{2404191406}
		|N(\beta)|=\prod_{j=1}^{h}N(\mathfrak{p}_j)^{e_j}=\prod_{j=1}^{h}q_j^{e_j}. 
	\end{equation}
	Consider the partition
	\begin{equation}\label{2404191442}
		\big[(a_j)_{j=0}^{k-1}\big]^{(l)}=\bigcup_{i=0}^{|N(\beta)|^{m_0}-1}\big[(a_j)_{j=0}^{k-1}\big]^{(l)}_{i}
	\end{equation}
	where 
	\[\begin{split}
		\big[(a_j)_{j=0}^{k-1}\big]^{(l)}_{i}:=\Big\{0\le n\le |N(\beta)|^k-1\colon& n\in\big[(a_j)_{j=0}^{k-1}\big]^{(l)}\\
		& \text{ and } n\equiv i\,(\mo1 |N(\beta)|^{m_0})\Big\}, 
	\end{split}\]
	and $m_0$ is defined as in Lemma \ref{2402221542}. 
	Suppose that $n,m$ are in $\big[(a_j)_{j=0}^{k-1}\big]^{(l)}_i$, then 
	\[\alpha^l(\alpha^u)^n+\beta^k\mathcal{O}_K=\sum_{i=0}^{k-1}a_i\beta^i+\beta^k\mathcal{O}_K=\alpha^l(\alpha^u)^m+\beta^k\mathcal{O}_K,\]
	that is, $\beta^k\mid \big(\alpha^l(\alpha^u)^n-\alpha^l(\alpha^u)^m\big)$. Recall that $(\beta)=\mathfrak p_1^{e_1}\cdots\mathfrak p_{h}^{e_h}$, we have 
	\begin{equation}\label{2402261328}
		\Big|\alpha^l(\alpha^u)^n-\alpha^l(\alpha^u)^m\Big|_{\mathfrak{p}_j}\le c^{ke_j},
	\end{equation}
	for all $j\in\{0,1,\dots,h\}$. Moreover, $n,m\in\big[(a_j)_{j=0}^{k-1}\big]^{(l)}_i$ also implies that 
	\[n\equiv m\,(\mo1 |N(\beta)|^{m_0}),\] 
	that is, $|N(\beta)|^{m_0}\mid n-m$. Combined this with \eqref{2404191406}, we have
	\[|n-m|_{\mathfrak{p}_j}\le c^{m_0},\]	
	for all $j\in\{0,1,\dots,h\}$. Hence, by Lemma \ref{2402221542} and \eqref{2402261329}, we have 
	\[|n-m|_{\mathfrak{p}_j}\le c^{-n_0}c^{ke_j},\]
	for all $j\in\{0,1,\dots,h\}$. This implies that  $\prod_{j=1}^{h}\mathfrak{p}_j^{ke_j-n_0}|(n-m)$. Recall the condition that $\mathfrak{p}_j$ is unramified for all $j$, this implies that 
	\[\prod_{j=1}^{h}q_j^{ke_j-n_0}\mid (n-m),\]
	where $q_i$ is the integer prime that lies in $\mathfrak{p}_i$. Hence, the distance \[|n-m|\ge\prod_{j=1}^{h}q_j^{ke_j-n_0},\] 
	which holds for each pair of distinct $n,m\in\big[(a_j)_{j=0}^{k-1}\big]_i^{(l)}$. Therefore,
	\begin{equation}\label{2402261724}
		\#\big[(a_j)_{j=0}^{k-1}\big]_i^{(l)}\le|N(\beta)|^k/\Big(\prod_{j=1}^{h}q_j^{ke_j-n_0}\Big).
	\end{equation}
	On the other hand, by \eqref{2404191406}, 
	\begin{equation}\label{2402261723}
		|N(\beta)|^k=\prod_{j=1}^{h}q_j^{ke_j}.
	\end{equation}
     Applying \eqref{2402261723} to \eqref{2402261724}, we have
	\begin{equation}\label{2402261746}
		\#\big[(a_j)_{j=0}^{k-1}\big]_i^{(l)}\le \widetilde{C}_0,
	\end{equation}
	where $\widetilde{C}_0:=\big(\prod_{j=1}^{h}q_j\big)^{n_0}$. Applying \eqref{2402261746}, \eqref{2404191442} to \eqref{2402261744}, we obtain
	\begin{equation}\label{2402261807}
		\mathcal M_b\big(\alpha,\beta,u|N(\beta)|^k\big)\le C_0\cdot\big|N(\beta)\big|^{k\sigma(\beta)}
	\end{equation}
	where $\sigma(\beta):=\frac{\log(|N(\beta)|-1)}{\log|N(\beta)|}$ and $C_0:=u|N(\beta)|^{m_0}\widetilde{C}_0$. For an integer $N\in\N$, there is an integer $k\in\N$ such that $|N(\beta)|^{k-1}\le N\le |N(\beta)|^k$; then, by \eqref{2402261807}, we have
	\[\begin{split}
		\mathcal M_b\big(\alpha,\beta,N\big)\le\mathcal M_b\big(\alpha,\beta,u|N(\beta)|^k\big)\le C_1 N^{\sigma(\beta)},
	\end{split}\]
	where $C_1:=C_0|N(\beta)|^{\sigma(\beta)}$. This completes the proof.
	
	\section*{Acknowledgements}
	We thank Wladyslaw Narkiewicz for pointing out errors in the previous version of this article. We thank the referee for helpful suggestions. This work was supported in part by NSFC No. 12471085, Science and Technology Commission of Shanghai Municipality (STCSM) No. 22DZ2229014, NSFC No. 12401006, and Guangdong Basic and Applied Basic Research Foundation No.  2023A1515110272.
	
	\section*{Declaration of interests}
	There are no relevant financial or non-financial competing interests to report.

\end{document}